\newtheorem{theorem}{Theorem}[section]
\newtheorem{lemma}{Lemma}
\numberwithin{lemma}{section}
\newtheorem{prop}[lemma]{Proposition}
\newtheorem{definition}[lemma]{Definition}
\newenvironment{potwr}[1]{{\noindent\it \underline{Proof of Theorem \ref{#1}}}:}{\qed}
\newcommand{\R}{\mathbb R}
\newcommand{\s}{\mathbb S}
\newcommand{\bo}{\mathbb B}
\newcommand{\vol}{\operatorname{vol}}
\def \eps{\varepsilon}
\def \gradient{\nabla}
\def \Lp{L_p}
\def \L2{L_2}
\def \ent {\operatorname{Ent}}
\begin{document}

\title{Sharp affine Sobolev type inequalities via the $\Lp$ Busemann-Petty centroid inequality}
\subjclass[2010]{ Primary 46E35, Secondary 51M16}
\keywords{$\Lp$ Busemann-Petty centroid inequality, affine logarithmic inequalities, affine Sobolev inequalities}

\author{J. Haddad, C.H. Jim\'enez, M. Montenegro}

\email{julianhaddad@ufmg.br}
\email{hugojimenez@mat.puc-rio.br}
\email{montene@mat.ufmg.br}

\address{Universidade Federal de Minas Gerais}
\address{Pontif\'icia Universidade Cat\'olica do Rio de Janeiro}
\address{Universidade Federal de Minas Gerais}

\maketitle

\begin{abstract}
We show that the $\Lp$ Busemann-Petty centroid inequality provides an elementary and powerful tool to the study of some sharp affine functional inequalities with a geometric content, like log-Sobolev, Sobolev and Gagliardo-Nirenberg inequalities. Our approach allows also to characterize directly the corresponding equality cases.
\end{abstract}

\section{Introduction and previous results}
\label{sec:intro}

The goal of the present work is to discuss a new approach for the study of well-known affine Sobolev type inequalities such as log-Sobolev, Sobolev and Gagliardo-Nirenberg inequalities
with sharp constants. More precisely, we wish to

\begin{itemize}

\item[$(1)$] give a unified and elementary treatment of sharp affine log-Sobolev, Sobolev and Gagliardo-Nirenberg inequalities;

\item[$(2)$] illustrate the efficiency of the $\Lp$ Busemann-Petty centroid inequality for the study of such inequalities, and by this method reveal in a more explicit way their geometrical
nature;

\item[$(3)$] determine all cases of equality in these sharp inequalities as a by-product of our method.

\end{itemize}
Our main contribution consists in providing a new point of view for such inequalities by combining tools of Convex Geometry and Analysis. We address these inequalities in a more elementary way by avoiding the use of the solution of the $L_p$ Minkowski problem or a strengthen P\'olya-Szeg\"o principle, which were fundamental ingredients when said inequalities were first obtained.

Isoperimetric inequalities provide us with tools to compare parameters associated (in general) to convex bodies. In recent years a lot of attention has been paid to extending the classical Euclidean versions of said inequalities. Among these extensions we find the class of affine isoperimetric inequalities, that remain invariant under the action of non-degenerate (often volume preserving) linear maps.

One of the main ingredients used here is the $\Lp$ Busemann-Petty centroid inequality. This affine isoperimetric inequality, first obtained by Lutwak, Yang and Zhang \cite{L-Y-Z}, generalizes the classical Busemann-Petty centroid inequality due to Petty \cite{Petty} that compares the ratio between the volume of a convex body $K$ and that of its centroid body.
A more recent proof was obtained in \cite{C-G} using the technique of shadow systems. Affine functional inequalities, as those treated in this work, have been obtained using essentially the $\Lp$ extension of the Minkowski problem, P\'olya-Szeg\"o type results and the $\Lp$ Petty projection inequality. The strong connection between the Petty projection and the Busemann-Petty (see \cite{Lut}) inequalities makes the use of the latter in this type of approach far from surprising, however, it provides us with some additional geometric information.

In order to show the efficiency of our approach, we choose three prototypes of sharp affine functional inequalities, namely sharp affine $\Lp$ log-Sobolev, Sobolev and Gagliardo-Nirenberg inequalities. Our method extends to other situations as can be quoted in the recent works \cite{VHN}, where Nguyen obtains a P\'olya-Szeg\"o type principle by using some central ideas of this work, and \cite{DHJM}, where the authors introduce and prove the sharp affine $L^2$ Sobolev trace inequality.

Logarithmic Sobolev type inequalities (or log-Sobolev inequalities) are a powerful tool in Real
Analysis, Complex Analysis, Geometric Analysis, Convex Geometry and Probability. The pioneer work by L. Gross \cite{Gr} put
forward the equivalence between a class of sharp log-Sobolev inequalities involving probability measures and hypercontractivity of the associated heat
semigroup. For the Gaussian measure, the Gross's inequality can be placed into another form known as $\L2$ log-Sobolev inequality (or $\L2$-entropy inequality). Later, several authors established the sharp $\Lp$ version of this latter one as well as its extremal functions. For a list of references concerning the $\Lp$ log-Sobolev inequality (or $\Lp$-entropy inequality), we refer to \cite{Be}, \cite{Carlen}, \cite{DPDo}, \cite{G}, \cite{Gr}, \cite{Le} and \cite{W}. For some applications on the study of optimal estimates for solutions of certain nonlinear diffusion equations, we quote  \cite{BGL}, \cite{BGL1}, \cite{DDG}, \cite{G0}, \cite{G}, \cite{W} and references therein.

Let $n \geq 1$ and $p > 1$. The Euclidean sharp $\Lp$ log-Sobolev inequality, proved by Gross \cite{Gr} and Carlen \cite{Carlen} for $p = 2$, Del Pino and Dolbeault \cite{DPDo} for $1 < p < n$ and Gentil \cite{G} for any $p > 1$, states that for any smooth function $f$ on $\R^n$, such that $\int |f|^p dx=1$, we have

\begin{equation} \label{log}
\ent(|f|^p)=\int|f|^p\log|f|^pdx\leq \frac{n}{p}\log\left(\mathcal{L}_{n,p} \int |\gradient f|^p dx\right),
\end{equation}
where
\[\mathcal{L}_{n,p} =\frac{p}{n} \left( \frac{p - 1}{e} \right)^{p - 1}
\pi^{-\frac{p}{2}} \left( \frac{\Gamma(\frac{n}{2} +
1)}{\Gamma(\frac{n(p - 1)}{p} + 1)} \right)^{\frac{p}{n}}\]
Moreover, the inequality (\ref{log}) is sharp and equality holds if and only if for some $\sigma>0$ and $x_0 \in\R^n$
\[\quad f(x)= \pm c_\sigma e^{-\frac{1}{\sigma}|x - x_0|^{\frac{p}{p-1}}}\]
for all $x \in \R^n$, where
\[
c_\sigma = \sigma^{-\frac{n(p-1)}{p}} \left( \int_{\R^n} e^{-|x|^{\frac{p^2}{p-1}}} \right)^{-\frac{1}{p}}.
\]
More specifically, extremal functions of (\ref{log}) were characterized in \cite{Carlen} for $p = 2$, \cite{DPDo} for $1 < p < n$ and \cite{G} for $p > 1$, being the uniqueness part, up to scalings and translations, proved in \cite{DPDo} for any $p > 1$. Inequality (\ref{log}) was proved by Ledoux \cite{Le} for $p = 1$ and its extremal functions were characterized by Beckner \cite{Be}.

In order to state the sharp affine Sobolev type inequalities, for each $p > 1$, let
\[
\mathcal{E}_p(f) = c_{n,p} \left(\int_{\s^{n-1}}\|\gradient_\xi f\|_p^{-n}d\xi\right)^{-\frac 1n}
\]
with
\begin{eqnarray*}
c_{n,p} = \left(n \omega_n\right)^{\frac 1n} \left(\frac{n \omega_n \omega_{p-1}}{2 \omega_{n+p-2}}\right)^{\frac 1p},
\end{eqnarray*}
where $\gradient_\xi f(x) = \gradient f(x) \cdot \xi$ and $\omega_k$ denotes the volume of the unit ball in $\R^k$.

The sharp affine $\Lp$ log-Sobolev inequality, proved independently by Haberl, Schuster, Xiao \cite{HSX} and Zhai \cite{Zhai} for $1 < p < n$, states that

\begin{theorem}
\label{thm:AffineLogSobolev}
Let $n \geq 1$ and $p > 1$. Then for any smooth function $f$ on $\R^n$ satisfying $\int |f|^pdx=1$, we have
\begin{equation} \label{log-affine}
\ent(|f|^p)=\int|f|^p\log|f|^pdx\leq \frac{n}{p}\log\left(\mathcal{L}_{n,p} \mathcal{E}_p(f)^p\right),
\end{equation}
where $\mathcal{L}_{n,p}$ is the best log-Sobolev constant given in (\ref{log}). Moreover, equality holds if and only if for some $\sigma>0$, $x_0 \in\R^n$ and $A\in SL_n$,
\[\quad f(x)= \pm c_\sigma e^{-\frac{1}{\sigma}|A(x - x_0)|^{\frac{p}{p-1}}}\]
for all $x \in \R^n$, where $SL_n$ denotes the set of $n \times n$-matrices preserving orientation and volume and
\[
c_\sigma = \sigma^{-\frac{n(p-1)}{p}} \left( \int_{\R^n} e^{-|x|^{\frac{p^2}{p-1}}} \right)^{-\frac{1}{p}}.
\]
\end{theorem}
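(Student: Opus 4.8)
The plan is to reduce the affine inequality \eqref{log-affine} to the classical Euclidean $\Lp$ log-Sobolev inequality \eqref{log}, with the $\Lp$ Busemann--Petty centroid inequality supplying the only genuinely geometric ingredient. Since $|\gradient|f||=|\gradient f|$ almost everywhere and every term of \eqref{log-affine} depends on $f$ only through $|f|$, I may assume $f\ge 0$; by truncation and mollification I may further assume that $f$ is smooth, positive and compactly supported with almost every positive value a regular value, the general case following by density. I would first record three facts. First, $\mathcal{E}_p$ is $SL_n$-invariant: a linear change of variables and the positive $1$-homogeneity of $\xi\mapsto\|\gradient_\xi f\|_p$ give $\|\gradient_\xi(f\circ A)\|_p=\|\gradient_{A\xi}f\|_p$ for $A\in SL_n$, and $\xi\mapsto A\xi/|A\xi|$ pushes $|A\xi|^{-n}\,d\xi$ forward to $d\xi$ on $\s^{n-1}$, so $\mathcal{E}_p(f\circ A)=\mathcal{E}_p(f)$. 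Second, $\ent(|f|^p)$ and $\int|f|^p\,dx$ are invariant under $SL_n$, under translations, and under the spherically symmetric decreasing rearrangement $f\mapsto f^\star$, all three depending only on the distribution function of $|f|$. Third, for radial $g$ the number $\|\gradient_\xi g\|_p$ does not depend on $\xi$, and $c_{n,p}$ is calibrated --- using $\int_{\s^{n-1}}|e\cdot\xi|^p\,d\xi=2\omega_{n+p-2}/\omega_{p-1}$ --- exactly so that $\mathcal{E}_p(g)=\|\gradient g\|_p$ for such $g$. Granting these, everything reduces to the single estimate $\mathcal{E}_p(f)\ge\|\gradient f^\star\|_p$, because then \eqref{log} applied to the radial function $f^\star$ yields
\[
\ent(|f|^p)=\ent(|f^\star|^p)\le\frac np\log\bigl(\mathcal{L}_{n,p}\|\gradient f^\star\|_p^p\bigr)\le\frac np\log\bigl(\mathcal{L}_{n,p}\,\mathcal{E}_p(f)^p\bigr).
\]

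The estimate $\mathcal{E}_p(f)\ge\|\gradient f^\star\|_p$ is where the $\Lp$ Busemann--Petty centroid inequality enters. The co-area formula gives, for every $\xi\in\s^{n-1}$,
\[
\|\gradient_\xi f\|_p^p=\int_{\R^n}|\gradient f\cdot\xi|^p\,dx=\int_0^\infty\Bigl(\int_{\partial\{f>t\}}|\gradient f|^{p-1}\,|\nu_t\cdot\xi|^p\,d\mathcal{H}^{n-1}\Bigr)\,dt ,
\]
with $\nu_t$ the outer unit normal of $\{f>t\}$; the inner integral is, up to a dimensional constant, the $p$-th power of the support function of the $\Lp$ centroid body of the level set $\{f>t\}$ carrying the boundary weight $|\gradient f|^{p-1}$ (equivalently, through Lutwak's correspondence \cite{Lut}, of its $\Lp$ projection body). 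I would then: (i) move the $t$-integral outside the $\xi$-integral by the reverse Minkowski inequality for the $L^{-n/p}(\s^{n-1})$ quasi-norm, which is legitimate because $-n/p<0$, thereby bounding $\mathcal{E}_p(f)^p$ below by $\int_0^\infty$ of a per-level quantity; (ii) for each fixed $t$, absorb the boundary weight $|\gradient f|^{p-1}$ via Hölder's inequality, using the co-area identity $\int_{\partial\{f>t\}}|\gradient f|^{-1}\,d\mathcal{H}^{n-1}=-\tfrac{d}{dt}\vol\{f>t\}$; and (iii) invoke the $\Lp$ Busemann--Petty centroid inequality for the level-set body $\{f>t\}$, replacing it by the centered ball of equal volume, which is its extremal configuration. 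Recombining the per-level estimates and recognizing the outcome as the classical distribution-function formula for $\int_{\R^n}|\gradient f^\star|^p\,dx$ completes the proof of the estimate, hence of \eqref{log-affine}.

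For the equality case, equality in \eqref{log-affine} forces equality at each step above. Equality in \eqref{log} for the radial function $f^\star$ forces, by the known characterization of the Euclidean extremals (see \cite{Carlen,DPDo,G}), $f^\star(x)=c_\sigma e^{-|x|^{p/(p-1)}/\sigma}$ for some $\sigma>0$, the translation being killed by radial symmetry. Equality in the $\Lp$ Busemann--Petty centroid inequality at almost every level forces each level set $\{f>t\}$ to be a centered ellipsoid, while equality in the Hölder and reverse-Minkowski steps forces these ellipsoids to be homothetic and $|\gradient f|$ to be constant on each of them; hence $f(x)=\phi\bigl(|A(x-x_0)|\bigr)$ for some $A\in SL_n$, $x_0\in\R^n$ and some decreasing $\phi$. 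Since $|\det A|=1$, comparing the volumes of the level sets of $f$ and $f^\star$ gives $f^\star(x)=\phi(|x|)$, whence $\phi(r)=c_\sigma e^{-r^{p/(p-1)}/\sigma}$; reinstating the sign and using $\int|f|^p\,dx=1$ to fix $c_\sigma$ produces exactly the extremals in the statement.

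I expect parts (ii)--(iii) of the second paragraph to be the main obstacle: the $\Lp$ Busemann--Petty centroid inequality has to be run level by level while the non-constant boundary weight $|\gradient f|^{p-1}$ is controlled at the same time, and enough rigidity must be propagated through all three steps for the equality discussion to close. This is precisely the place where previous derivations of such affine functional inequalities appealed to the solution of the $\Lp$ Minkowski problem or to a strengthened P\'olya--Szeg\"o principle; the point of the present route is that the $\Lp$ Busemann--Petty centroid inequality --- which itself admits an elementary proof via shadow systems \cite{C-G} --- already carries all the information that is needed.
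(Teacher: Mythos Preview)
Your route is genuinely different from the paper's. You pass through the spherical rearrangement $f^\star$ and reduce everything to the affine P\'olya--Szeg\"o inequality $\mathcal{E}_p(f)\ge\|\nabla f^\star\|_p$, which you then try to prove level by level via co-area, H\"older, reverse Minkowski, and the $\Lp$ Busemann--Petty centroid inequality applied to each sublevel set $\{f>t\}$. That is precisely the rearrangement-based strategy of \cite{HSX} and \cite{Zhai} that the present paper is written to avoid. The paper never rearranges $f$. Instead it builds from $f$ a single convex body $L_f=\{\xi:\|\nabla_\xi f\|_p\le 1\}$, defines the $p$-homogeneous function $C_f^*(x)=\int_{\s^{n-1}}\|\nabla_\xi f\|_p^{-n-p}|\langle x,\xi\rangle|^p\,d\xi$ (whose Legendre transform $C_f$ has unit ball $K_f$), applies Gentil's general-norm log-Sobolev inequality (Theorem~\ref{thm:gentil}) with $C=C_f$, and then uses the $\Lp$ Busemann--Petty centroid inequality \emph{once}, on the single body $L_f$, to obtain $\vol(K_f)^{-p/n}\int C_f^*(\nabla f)\le a_2 Z_p(f)^p$ (Theorem~\ref{thm:main_ineq}). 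The equality case falls out immediately: equality in Busemann--Petty forces $L_f$ to be an ellipsoid, which provides a linear change of variables reducing to the Euclidean extremals. What the paper buys is brevity and a direct line from Busemann--Petty to the affine inequality without any level-set decomposition or rearrangement; what your route would buy, if completed, is the intermediate affine P\'olya--Szeg\"o principle itself, a stronger statement reusable for other inequalities.

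Your sketch also has a real gap at the step you flag. The $\Lp$ Busemann--Petty centroid inequality is stated for convex (or at least origin-star-shaped) bodies, but the level sets $\{f>t\}$ of a general smooth compactly supported $f$ need be neither; you would first have to pass to convex level sets (via a further convex rearrangement, as in \cite{HSX}) or argue that the inequality you need extends. Moreover, the ``absorb $|\nabla f|^{p-1}$ by H\"older'' step (ii), combined with the reverse Minkowski step (i), must be arranged so that the per-level lower bound reassembles exactly into $\|\nabla f^\star\|_p^p$; this requires matching the isoperimetric deficit and the distribution-function identity precisely, and is where previous proofs invoke the $\Lp$ Petty projection inequality (or equivalently Busemann--Petty via the Lutwak correspondence) together with the classical isoperimetric-type step, not Busemann--Petty alone. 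As written, steps (i)--(iii) are a plausible outline of \cite{HSX} rather than a self-contained argument.
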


Theorem \ref{thm:AffineLogSobolev} has been stated in \cite{HSX} only for $1 < p < n$, although their approach, based on inequalities involving rearrangements of functions and inequality (\ref{log}), allows to extend this theorem to any $p > 1$. We present a new and arguably shorter proof for any $p > 1$ based on the $\Lp$ Busemann-Petty centroid inequality and $L^p$ log-Sobolev inequality involving arbitrary norms (see \cite{G}).

The Euclidean sharp $L_\infty$ log-Sobolev inequality, proved by Fujita in \cite{Fu}, states for any Lipschitz function $f$ on $\R^n$ satisfying $\int e^{\beta f} dx=1$ that

\begin{equation} \label{log-inft}
\ent(e^{\beta f}) \leq n \log\left(\frac{\beta k_n}{e} ||\gradient f||_\infty \right),
\end{equation}
where $\beta > 0$ is a constant and

\[
k_n = \left( \frac{1}{n! \omega_n}\right)^{\frac 1n} .
\]
Moreover, equality holds for normalized functions of the form $f(x) = c - b |x - a|$, where $a \in \R^n$, $b > 0$ and $c \in \R$.

Let
\[
\mathcal{E}_\infty(f) = c_{n,\infty} \left(\int_{\s^{n-1}}\|\gradient_\xi f\|_\infty^{-n}d\xi\right)^{-\frac 1n}
\]
with
\begin{eqnarray*}
c_{n,\infty} = \left(n \omega_n\right)^{\frac 1n} .
\end{eqnarray*}

Our method also produces a new and important $L_\infty$ affine version of the sharp inequality (\ref{log-inft}) stated as follows.

\begin{theorem}
\label{thm:AffineLogSobolev-infty}
Let $n \geq 1$ and $\beta > 0$. Then for any Lipschitz function $f$ on $\R^n$ satisfying $\int e^{\beta f} dx=1$, we have
\[\ent(e^{\beta f}) \leq n \log\left(\frac{\beta k_n}{e} \mathcal{E}_\infty(f) \right).\]
Moreover, equality holds for normalized functions of the form $f(x) = c - b |Ax - a|$, where $a \in \R^n$, $b > 0$, $c \in \R$ and $A\in SL_n$.
\end{theorem}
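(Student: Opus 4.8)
The plan is to carry the two-ingredient scheme announced for Theorem~\ref{thm:AffineLogSobolev} (an arbitrary-norm log-Sobolev inequality, combined with a geometric comparison) over to the $L_\infty$ regime, the novelty being that the star body naturally attached to $f$ turns out to be convex, so that it can be used directly as the norm. The first ingredient is the arbitrary-norm form of Fujita's inequality~(\ref{log-inft}): for every origin-symmetric convex body $K\subset\R^n$ with radial function $\rho_K$ and gauge $\|\cdot\|_K$, and every Lipschitz $f$ with $\int e^{\beta f}dx=1$,
\[
\ent(e^{\beta f})\le n\log\left(\frac{\beta}{e}\,\bigl(n!\,\vol(K)\bigr)^{-1/n}\sup_{\xi\in\s^{n-1}}\rho_K(\xi)\,\|\gradient_\xi f\|_\infty\right),
\]
with equality for $f(x)=c-b\|x-x_0\|_K$ $(b>0)$. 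This is the exact analogue of the ``$\Lp$ log-Sobolev inequality involving arbitrary norms'' quoted from~\cite{G}: one obtains it either by rerunning Fujita's argument with the Euclidean ball replaced by $K$, or by letting $p\to\infty$ in the arbitrary-norm $\Lp$ log-Sobolev inequality after the substitution $|f_p|^p=e^{\beta f}$. The mechanism is that, once $K$ is rescaled so that $f$ is $1$-Lipschitz for $\|\cdot\|_K$, the density $e^{\beta f}$ obeys $e^{\beta f(x)}\le e^{\beta f(y)}e^{\beta\|x-y\|_K}$, hence (integrating in $y$) $\|e^{\beta f}\|_\infty\le\beta^n/(n!\,\vol(K))$, a bound attained exactly by the ``$K$-exponential'' density; Fujita's argument then converts this into the entropy inequality, the cone being its extremizer.

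With this lemma in hand the remaining step is purely geometric. Put $L:=\overline{\mathrm{conv}}\{\pm\gradient f(x):x\in\R^n\}$. Since $f$ is Lipschitz, $L$ is bounded; and since $\int e^{\beta f}dx=1<\infty$ forces $f$ to be non-constant in every direction, $L$ is full-dimensional, hence an origin-symmetric convex body with the origin in its interior. Its support function is precisely $h_L(\xi)=\sup_x|\gradient f(x)\cdot\xi|=\|\gradient_\xi f\|_\infty$, so the polar body $K:=L^{\circ}=\{\xi:\|\gradient_\xi f\|_\infty\le1\}$ is again an origin-symmetric convex body, with radial function $\rho_K(\xi)=\|\gradient_\xi f\|_\infty^{-1}$. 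Therefore
\[
\vol(K)=\frac1n\int_{\s^{n-1}}\|\gradient_\xi f\|_\infty^{-n}\,d\xi,\qquad \sup_{\xi\in\s^{n-1}}\rho_K(\xi)\,\|\gradient_\xi f\|_\infty=1.
\]
Feeding this $K$ into the lemma and using $(n!/n)^{-1/n}(n\omega_n)^{-1/n}=(n!\,\omega_n)^{-1/n}=k_n$ together with the definition of $\mathcal{E}_\infty(f)$ gives
\[
\ent(e^{\beta f})\le n\log\left(\frac{\beta}{e}\Bigl(n!\cdot\tfrac1n\textstyle\int_{\s^{n-1}}\|\gradient_\xi f\|_\infty^{-n}d\xi\Bigr)^{-1/n}\right)=n\log\left(\frac{\beta k_n}{e}\,\mathcal{E}_\infty(f)\right),
\]
which is the asserted inequality. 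Note that, in contrast with the $\Lp$ case, no Busemann--Petty centroid inequality is needed here: the ``ideal'' star body, with radial function $\|\gradient_\xi f\|_\infty^{-1}$, happens to be convex (its reciprocal radial function $\|\gradient_\xi f\|_\infty=h_L(\xi)$ is a support function), so it can be plugged directly into the arbitrary-norm lemma; this is what makes the $L_\infty$ version formally easier.

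For the equality assertion it suffices to check that each $f(x)=c-b|Ax-a|$ with $A\in SL_n$ attains equality. For such $f$, $\gradient f(x)=-bA^{\mathsf T}u$ with $u=(Ax-a)/|Ax-a|$ describing all of $\s^{n-1}$, so $\|\gradient_\xi f\|_\infty=b|A\xi|$; since $\int_{\s^{n-1}}|A\xi|^{-n}d\xi=n\,\vol(A^{-1}\bo^{n})=n\omega_n$ (using $\det A=1$), this yields $\mathcal{E}_\infty(f)=b$, while a direct evaluation of $\ent(e^{\beta f})$ under the normalization $\int e^{\beta f}dx=1$ (equivalently $e^{\beta c}=(\beta b)^n/(n!\,\omega_n)$) gives $\ent(e^{\beta f})=n\log(\beta b k_n/e)$, matching the right-hand side; equivalently, $f(x)=c-\|x-A^{-1}a\|_K$ with $K=\tfrac1bA^{-1}\bo^{n}$, which is the extremal configuration in the lemma for this $K$, and one checks that the body $L^{\circ}$ built above equals this $K$, so equality propagates. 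The only nontrivial point in the whole scheme is the lemma, i.e., the sharp arbitrary-norm Fujita inequality together with its extremizers; everything after it is bookkeeping with $k_n$, $c_{n,\infty}$ and $\mathcal{E}_\infty$. (Alternatively one may deduce the theorem from Theorem~\ref{thm:AffineLogSobolev} applied to $f_p:=e^{\beta f/p}$, for which $\int|f_p|^p dx=1$, $\ent(|f_p|^p)=\ent(e^{\beta f})$ and $\|\gradient_\xi f_p\|_p=\tfrac{\beta}{p}(\int e^{\beta f}|\gradient_\xi f|^p dx)^{1/p}$, and then letting $p\to\infty$; the constants satisfy $\mathcal{L}_{n,p}^{1/p}\,c_{n,p}/(p\,c_{n,\infty})\to k_n/e$, an identity equivalent to $\omega_n=\pi^{n/2}/\Gamma(\tfrac n2+1)$, so the limit reproduces~(\ref{log-inft}) with $\mathcal{E}_\infty(f)$ in place of $\|\gradient f\|_\infty$. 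In that route the difficulty moves to the uniform-in-$\xi$ control needed to pass the limit inside $\int_{\s^{n-1}}d\xi$.)
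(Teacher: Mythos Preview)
Your proof is correct and follows essentially the same route as the paper: an arbitrary-norm Fujita inequality (obtained by applying Gentil's theorem to $e^{\beta f/p}$ and letting $p\to\infty$) is applied with the body $K$ whose radial function is $\|\gradient_\xi f\|_\infty^{-1}$, and the equality case is verified by direct computation on $f(x)=c-b|Ax-a|$. Your exposition is somewhat more explicit than the paper's (you spell out why $K=L^\circ$ is a genuine origin-symmetric convex body via $h_L(\xi)=\|\gradient_\xi f\|_\infty$, and you make the point that no Busemann--Petty step is needed because the star body is already convex), and you add the alternative limiting argument from Theorem~\ref{thm:AffineLogSobolev}, but the core argument coincides with the paper's.
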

This theorem can be seen as a limiting case of Theorem \ref{thm:AffineLogSobolev} and its proof is based on this idea.

It is easy to see that the sharp affine $\Lp$ log-Sobolev inequality is stronger than its Euclidean counterpart for any $p>1$ since $\mathcal{E}_p(f) \leq \|\gradient f\|_p$ (see page 33 of \cite{L-Y-Z-1}). Note also that the above affine inequality is invariant under volume preserving affine transformations.

A well-known family of sharp interpolation inequalities is the Euclidean Gagliardo-Nirenberg inequality which states that for any smooth function $f$ on $\R^n$,

\begin{equation}\label{dgne}
\|f\|_r \leq \mathcal{G}_{n,p,m,r} ||\nabla f||_p^\theta \|f\|^{1-\theta}_m,
\end{equation}
where $1 \leq p < n$, $1 \leq m < r < \frac{np}{n-p}$, $\theta = \frac{np(r - m)}{r(m(p - n) + np)} \in (0,1)$ and $\mathcal{G}_{n,p,m,r}$ is the best Gagliardo-Nirenberg constant which is well defined thanks to a standard combination between the H\"{o}lder's inequality and the Sobolev inequality. Inequality (\ref{dgne}) was introduced independently by Gagliardo and Nirenberg in \cite{Ga} and \cite{Ni}. Some particular cases are quite
famous. For instance, in the limiting case $r = \frac{np}{n-p}$, (\ref{dgne}) yields the Sobolev inequality proved by
Sobolev in \cite{S} with extremal functions and best constants obtained by Federer and Fleming \cite{FF} and Maz'ya \cite{Ma} for $p=1$ and by Aubin \cite{Au1} and Talenti \cite{Ta} for $1 < p < n$. Indeed, in that latter one, $f$ is an extremal function of the sharp Sobolev inequality if and only if

\[f(x) = a \left(1 + b|x - x_0|^\frac p{p-1}\right)^{1-\frac pn}\]
for some $a \in \R$, $b > 0$ and $x_0 \in \R^n$, and the best Sobolev constant, denoted by $\mathcal{S}_{n,p}$, is given by

\begin{equation} \label{bsc}
\mathcal{S}_{n,p} = \pi^{-\frac 12} n^{-\frac 1p} {\left(\frac{p-1}{n-p}\right)^{1-\frac 1p} \left(\frac{\Gamma \left(\frac{n}{2}+1\right) \Gamma (n)}{\Gamma \left(-\frac{n}{p}+n+1\right) \Gamma \left(\frac{n}{p}\right)}\right)^{\frac{1}{n}}}
\end{equation}
Two other celebrated ones are: the Nash inequality, introduced by Nash in \cite{Na}, corresponds to $p = 2$, $m = 1$ and $r = 2$, with best constant computed by Carlen and Loss \cite{CaLo}, and the Moser inequality, introduced by Moser in \cite{Mo}, corresponds to $p = 2$, $m = 2$ and $r = \frac{2(n+2)}{n}$. Later, Del Pino and Dolbeault (\cite{DPDo1} for $p = 2$ and \cite{DPDo} for $1 < p < n$) and Cordero-Erausquin, Nazaret and Villani (\cite{CNV} for $1 < p < n$) independently obtained
extremal functions and best constants for the family of parameters $r = \alpha p$ and $m = \alpha (p-1) +1$ with $\alpha \in (1, \frac n{n-p})$. Namely, in this cases, $f$ is an extremal function if and only if

\[f(x) = a \left(1 + b|x - x_0|^\frac p{p-1}\right)^{-\frac{p-1}{m-p}}\]
for some $a \in \R$, $b > 0$ and $x_0 \in \R^n$, and so $\mathcal{G}_{n,p,\alpha (p-1) +1,\alpha p}$ can be explicitly computed and its value will be presented and used later.

A stronger version of the sharp Sobolev inequality, known as sharp affine $\Lp$ Sobolev inequality, was introduced and proved by Zhang \cite{Z} for $p=1$ and by Lutwak, Yang and Zhang \cite{L-Y-Z-1} for $1<p<n$. Recently, Zhai (Corollary 1.5 of \cite{Zhai}) proved an affine version of the sharp Gagliardo-Nirenberg inequality for the same family considered in \cite{DPDo1}, \cite{DPDo} and \cite{CNV}. An alternative proof has been carried out by Haberl, Schuster and Xiao (Corollary 12 of \cite{HSX}) by using their affine P\'{o}lya-Szeg\"{o} principle as the key tool.

The sharp affine Sobolev and Gagliardo-Nirenberg inequalities are the following

\begin{theorem}
\label{thm:AffineSobolev}
Let $n > 1$ and $1 < p < n$.
Then for any smooth function $f$ on $\R^n$, we have
\[\|f\|_{\frac{n p}{n-p} } \leq \mathcal{S}_{n,p} \mathcal{E}_p(f).\]
Moreover, equality holds if and only if
\[f(x) = a\left(1 + b|A(x - x_0)|^\frac p{p-1}\right)^{1-\frac pn}\]
for some $a \in \R$, $b > 0$, $x_0 \in \R^n$ and $A \in GL_n$, where $GL_n$ denotes the set of invertible $n \times n$-matrices.
\end{theorem}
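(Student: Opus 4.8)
\textbf{Proof strategy for Theorem \ref{thm:AffineSobolev}.}
The plan is to reduce the affine Sobolev inequality to the Euclidean one via the $\Lp$ Busemann-Petty centroid inequality, following the same philosophy I would use for Theorem \ref{thm:AffineLogSobolev}. The starting point is the observation that for a fixed smooth $f$, the quantity $\mathcal{E}_p(f)^{-n}$ is (up to a dimensional constant) the volume of a convex body $M_p(f)$, the $\Lp$ moment body of the ``gradient distribution'' of $f$, whose support function is given by $h_{M_p(f)}(\xi)^p = \gamma_{n,p}\int_{\R^n}|\gradient_\xi f(x)|^p\,dx$ for an appropriate normalization $\gamma_{n,p}$. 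Concretely, one checks that $\|\gradient_\xi f\|_p$ is the support function of a convex body, and that the integral $\int_{\s^{n-1}}\|\gradient_\xi f\|_p^{-n}\,d\xi$ is proportional to the volume of its polar. The $\Lp$ Busemann-Petty centroid inequality then states precisely that among all such bodies of a given volume, the ball minimizes the volume of the centroid body; rephrased in functional terms this yields $\mathcal{E}_p(f) \le \|\gradient f\|_p$ always, with a rigidity statement: there is an $A \in GL_n$ such that the body $M_p(f)$ is a ball after applying $A$.

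The key steps, in order, are: (i) record the identity expressing $\mathcal{E}_p(f)$ through the support function $\xi \mapsto \|\gradient_\xi f\|_p$ and its associated star/convex body, and state the $\Lp$ Busemann–Petty centroid inequality in the form that is convenient here; (ii) given $f$, find $A \in SL_n$ such that the image body $A\,M_p(f)$ has the same volume as a Euclidean ball and, crucially, such that $\mathcal{E}_p(f) = \mathcal{E}_p(f\circ A^{-1})$ while $\mathcal{E}_p(f\circ A^{-1})$ coincides with the Euclidean gradient norm $\|\gradient(f\circ A^{-1})\|_p$ up to a controlled factor — this is the point where one exploits that $\mathcal{E}_p$ is $SL_n$-invariant under composition; (iii) apply the Euclidean sharp Sobolev inequality (\ref{bsc}) to the function $g = f\circ A^{-1}$, obtaining $\|g\|_{np/(n-p)} \le \mathcal{S}_{n,p}\|\gradient g\|_p$; (iv) translate back: since $\|g\|_{np/(n-p)} = \|f\|_{np/(n-p)}$ (as $A \in SL_n$ is volume preserving) and $\|\gradient g\|_p = \mathcal{E}_p(f)$ by the choice of $A$, conclude $\|f\|_{np/(n-p)} \le \mathcal{S}_{n,p}\,\mathcal{E}_p(f)$.

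For the equality analysis, I would trace the two inequalities that were used. Equality in the $\Lp$ Busemann-Petty centroid inequality forces the body $M_p(f)$ to be an ellipsoid (centered at the origin), i.e.\ $\|\gradient_\xi f\|_p$ is, up to a linear change of variables $A$, a constant multiple of $|\xi|$; this is exactly the condition that makes $\mathcal{E}_p(f\circ A^{-1})$ equal to $\|\gradient(f\circ A^{-1})\|_p$ with no loss. Then equality in the Euclidean Sobolev inequality for $g = f\circ A^{-1}$ forces $g(x) = a(1+b|x-x_1|^{p/(p-1)})^{1-p/n}$, and undoing the change of variables gives $f(x) = a(1+b|A(x-x_0)|^{p/(p-1)})^{1-p/n}$ for some $x_0$; conversely these functions realize equality by a direct check, since for them $\|\gradient_\xi f\|_p$ is, after the linear map, radial. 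The main obstacle I anticipate is step (ii) together with the equality discussion: one must verify carefully that the convex body associated with $\xi\mapsto\|\gradient_\xi f\|_p$ behaves correctly under precomposition of $f$ with linear maps (the map on bodies is the dual/transpose action), that the normalizing constants $c_{n,p}$ and $\mathcal{S}_{n,p}$ match up so that no spurious factor survives, and that the ``ellipsoid'' conclusion from the centroid inequality is genuinely equivalent to the reduction being lossless. A secondary technical point is justifying the manipulations (finiteness of the integrals, the body being a genuine convex body with nonempty interior) for general smooth $f$, which may require first treating $f$ with suitable decay and then a density/truncation argument.
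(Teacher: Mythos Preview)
Your step (ii) contains a genuine gap that cannot be repaired by bookkeeping. Under the substitution $g=f\circ A^{-1}$ with $A\in SL_n$, the body $L_f=\{\xi:\|\gradient_\xi f\|_p\le 1\}$ transforms as $L_g=A\,L_f$. Thus $L_g$ is a Euclidean ball (equivalently, $\mathcal{E}_p(g)=\|\gradient g\|_p$) \emph{if and only if $L_f$ was already an ellipsoid}. For a generic smooth $f$ this is false, and no $A\in SL_n$ achieves it. All that survives is the universal inequality $\mathcal{E}_p(g)\le\|\gradient g\|_p$, which points the wrong way: combining it with the Euclidean Sobolev inequality for $g$ only reproduces the weaker Euclidean bound for $f$, not the affine one. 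In short, the $\Lp$ Busemann--Petty inequality is a volume comparison, not a statement that every $L_f$ can be linearly mapped to a ball; your ``controlled factor'' in (ii) is shape-dependent and cannot be made equal to $1$.

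The paper's route circumvents this by \emph{not} reducing to the Euclidean norm. It applies the sharp Sobolev inequality of Cordero--Nazaret--Villani for an \emph{arbitrary} norm, choosing the $f$-adapted norm $\|x\|_{K_f}=C_f(x)^{1/q}$ whose dual is tailored so that $\int C_f^*(\gradient f)=n\vol(L_f)$ exactly (Lemma~\ref{eqn_C_grad_f}). The point is that $K_f$ is a dilate of the centroid body $\Gamma_p L_f$ (Lemmas~\ref{eqn_C_support_centroid}--\ref{eqn_legendre_polar_equivalence}), so the Busemann--Petty inequality $\vol(\Gamma_pL_f)\ge\vol(L_f)$ controls $\vol(K_f)$ in terms of $\vol(L_f)=\frac1n Z_p(f)^{-n}$, which is precisely Theorem~\ref{thm:main_ineq}. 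Equality in Busemann--Petty then forces $L_f$ to be an ellipsoid, at which stage (and only then) your linear change of variables becomes available and reduces the problem to the Euclidean equality case. So the ellipsoid hypothesis you need in step (ii) is not an input but the \emph{output} of the equality analysis; for the inequality itself you must work with the general-norm Sobolev inequality.
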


\begin{theorem}
\label{thm:AffineGN}
Let $n>1$ and $1 < p < n$, $r = \alpha p$ and $m = \alpha (p-1) +1$. Assume $\alpha \in (1, \frac n{n-p})$.
Then for any smooth function $f$ on $\R^n$, we have
\begin{equation}
\label{eqn_AffineGN}
\|f\|_r \leq \mathcal{G}_{n,p,\alpha} \mathcal{E}_p(f)^\theta \|f\|^{1-\theta}_m,
\end{equation}
where $\mathcal{G}_{n,p,\alpha} = \mathcal{G}_{n,p,\alpha (p-1) +1,\alpha p}$. Moreover, equality holds if and only if
\[f(x) = a\left(1 + b|A(x - x_0)|^\frac p{p-1}\right)^{-\frac{p-1}{m-p}}\]
for some $a \in \R$, $b > 0$, $x_0 \in \R^n$ and $A \in GL_n$.
\end{theorem}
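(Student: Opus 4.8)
The plan is to combine the sharp $\Lp$ Gagliardo--Nirenberg inequality relative to an \emph{arbitrary} norm -- which is elementary in the sense that its known proofs do not use any symmetrisation -- with the $\Lp$ Busemann--Petty centroid inequality, the latter being used to trade the arbitrary norm for the affine invariant energy $\mathcal E_p(f)$. Replacing $f$ by $|f|$ does not increase any $\|\gradient_\xi f\|_p$, hence does not increase $\mathcal E_p(f)$, and leaves $\|f\|_r,\|f\|_m$ unchanged; together with a routine truncation/approximation this lets us assume $f\geq 0$ smooth with all quantities below finite. The first ingredient is the sharp Gagliardo--Nirenberg inequality relative to a norm, due to Cordero-Erausquin--Nazaret--Villani (and, in the radial case, Del Pino--Dolbeault): for every origin-symmetric convex body $Z\subset\R^n$,
\begin{equation}\label{eqn_GNnorm}
\|f\|_r\ \leq\ \mathcal G_{n,p,\alpha}\left(\frac{\vol(Z^\circ)}{\omega_n}\right)^{\theta/n}\left(\int_{\R^n}h_Z(\gradient f(x))^p\,dx\right)^{\theta/p}\|f\|_m^{1-\theta},
\end{equation}
$h_Z$ denoting the support function of $Z$ and $Z^\circ$ its polar; equality holds if and only if $f(x)=a\bigl(1+b\,\|x-x_0\|_Z^{\frac{p}{p-1}}\bigr)^{-\frac{p-1}{m-p}}$ for some $a\in\R$, $b>0$, $x_0\in\R^n$, where $\|\cdot\|_Z$ is the gauge of $Z$. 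For $Z=\bo^n$ this is \eqref{dgne} with constant $\mathcal G_{n,p,\alpha}$, and the right-hand side of \eqref{eqn_GNnorm} is invariant under dilations of $Z$.

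To $f$ we associate the origin-symmetric convex body $\langle f\rangle_p$ with support function $h_{\langle f\rangle_p}(\xi)=\|\gradient_\xi f\|_p$, which is legitimate because $\xi\mapsto\|\gradient_\xi f\|_p$ is sublinear and positively $1$-homogeneous. Its polar has radial function $\|\gradient_\xi f\|_p^{-1}$, so $\vol(\langle f\rangle_p^\circ)=\frac1n\int_{\s^{n-1}}\|\gradient_\xi f\|_p^{-n}\,d\xi$ and hence $\mathcal E_p(f)=c_{n,p}\bigl(n\,\vol(\langle f\rangle_p^\circ)\bigr)^{-1/n}$. Writing $g(Z)$ for the left-hand side of
\begin{equation}\label{eqn_BPstep}
\left(\frac{\vol(Z^\circ)}{\omega_n}\right)^{1/n}\left(\int_{\R^n}h_Z(\gradient f(x))^p\,dx\right)^{1/p}\ \geq\ \mathcal E_p(f),
\end{equation}
the $\Lp$ Busemann--Petty centroid inequality enters as the statement that \eqref{eqn_BPstep} holds for \emph{every} origin-symmetric convex body $Z$, with equality at a minimiser $Z_*$ of $g$ (which can be written down explicitly in terms of the radial distribution of $\gradient f$). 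The mechanism is that the co-area formula decomposes $\int h_Z(\gradient f)^p$ along the level sets of $f$, and \eqref{eqn_BPstep} then follows by applying the $\Lp$ Busemann--Petty centroid inequality level set by level set; the normalisation $c_{n,p}$ in $\mathcal E_p$ is exactly the one making \eqref{eqn_BPstep} an equality for $Z_*$ (and for ellipsoids -- taking $Z=\bo^n$ recovers the classical estimate $\mathcal E_p(f)\leq\|\gradient f\|_p$). Applying \eqref{eqn_GNnorm} with $Z=Z_*$ and using $g(Z_*)=\mathcal E_p(f)$ gives $\|f\|_r\leq\mathcal G_{n,p,\alpha}\,\mathcal E_p(f)^\theta\|f\|_m^{1-\theta}$, which is \eqref{eqn_AffineGN}.

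For the equality cases, assume equality in \eqref{eqn_AffineGN}. Since $\|f\|_r\leq\mathcal G_{n,p,\alpha}\,g(Z_*)^\theta\|f\|_m^{1-\theta}=\mathcal G_{n,p,\alpha}\,\mathcal E_p(f)^\theta\|f\|_m^{1-\theta}$, equality must hold in \eqref{eqn_GNnorm} for $Z=Z_*$, so by the characterisation above $f(x)=a\bigl(1+b\,\|x-x_0\|_{Z_*}^{\frac{p}{p-1}}\bigr)^{-\frac{p-1}{m-p}}$; in particular the level sets of $f$ are homothets of $Z_*$. Computing $\langle f\rangle_p$ for such an $f$ in coordinates adapted to $Z_*$ exhibits $\langle f\rangle_p$ as a dilate of the $\Lp$ projection body $\Pi_p Z_*$ of $Z_*$; substituting this into the identity $g(Z_*)=\mathcal E_p(f)$ produces a relation of the form $\vol(Z_*^\circ)\,\vol(Z_*)^{n/p}\,\vol((\Pi_p Z_*)^\circ)=\mathrm{const}$, and the constant is precisely the value at which the product of the $\Lp$ Petty projection inequality (equivalent to the $\Lp$ Busemann--Petty centroid inequality) with the Blaschke--Santal\'o inequality is attained; hence $Z_*$, and therefore every level set of $f$, must be an ellipsoid, say $Z_*=A^{-1}\bo^n$ with $A\in GL_n$. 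This is the asserted form $f(x)=a\bigl(1+b\,|A(x-x_0)|^{\frac{p}{p-1}}\bigr)^{-\frac{p-1}{m-p}}$. Conversely, such an $f$ is (up to a translation) an affine image $f_0\circ A$ of the radial Euclidean extremal $f_0$ of \eqref{dgne}; combining the homogeneity $\mathcal E_p(f_0\circ A)=|\det A|^{\frac1n-\frac1p}\mathcal E_p(f_0)$ with $\mathcal E_p(f_0)=\|\gradient f_0\|_p$ and the corresponding homogeneities of the $L^r$ and $L^m$ norms, one checks -- using the scaling identity $\frac1r=\frac1m+\theta\bigl(\frac1p-\frac1m-\frac1n\bigr)$ -- that \eqref{eqn_AffineGN} becomes an equality.

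The main obstacle is \eqref{eqn_BPstep}: one has to carry out the co-area decomposition of $\int h_Z(\gradient f)^p$, apply the $\Lp$ Busemann--Petty centroid inequality to each level set, re-assemble, and verify that the constants produced are exactly those in $\mathcal E_p$ (this is where the precise value of $c_{n,p}$ gets pinned down); and for the equality discussion one must in addition propagate the ellipsoidal equality case of the $\Lp$ Busemann--Petty centroid inequality back through that decomposition and through the equality case of \eqref{eqn_GNnorm}. By comparison, the interpolation bookkeeping (the value of $\theta$, the powers of $\det A$) is routine.
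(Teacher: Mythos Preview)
Your high-level strategy---apply the Cordero--Nazaret--Villani inequality for a well-chosen norm, then use the $\Lp$ Busemann--Petty centroid inequality to trade the norm for $\mathcal E_p(f)$---is exactly the paper's. But two concrete errors keep the argument from closing.

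\textbf{The CNV constant in \eqref{eqn_GNnorm} is wrong.} For $C(x)\propto\|x\|_Z^q$ the sharp constant scales like $\vol(\{C\le1\})^{-\theta/n}\propto\vol(Z)^{-\theta/n}$, \emph{not} $\vol(Z^\circ)^{\theta/n}$; these agree only when $\vol(Z)\vol(Z^\circ)=\omega_n^2$, so checking $Z=\bo^n$ does not validate the formula. With the correct factor the relevant functional is $\tilde g(Z)=(\omega_n/\vol(Z))^{1/n}\bigl(\int h_Z(\nabla f)^p\bigr)^{1/p}$.

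\textbf{The direction and mechanism of \eqref{eqn_BPstep} are off.} What the argument needs is a \emph{single} body $Z$ with $\tilde g(Z)\le\mathcal E_p(f)$, not a lower bound for all $Z$ with equality at some $Z_*$. In fact no such $Z_*$ exists in general: the paper's choice is the body $K_f$ with $h_{K_f}^p\propto C_f^*$, where $C_f^*(v)=\int_{\s^{n-1}}\|\nabla_\xi f\|_p^{-n-p}|\langle v,\xi\rangle|^p\,d\xi$, and one gets the \emph{strict} inequality $\tilde g(K_f)<\mathcal E_p(f)$ unless $L_f$ is already an ellipsoid. The two computations that make this work are (i) $\int C_f^*(\nabla f)=n\vol(L_f)$ by Fubini, and (ii) $K_f$ is a dilate of $\Gamma_p L_f$; then a \emph{single} application of Busemann--Petty to $L_f$ gives $\vol(K_f)^{-p/n}\int C_f^*(\nabla f)\le a_2 Z_p(f)^p$, and substituting into CNV yields the affine Gagliardo--Nirenberg inequality. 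No co-area decomposition is used---nor could it be, since for $p>1$ the integral $\int h_Z(\nabla f)^p$ does not split along level sets of $f$. The equality case then falls out directly: equality in the chain forces $\vol(\Gamma_p L_f)=\vol(L_f)$, hence $L_f$ (and thus $K_f$) is an ellipsoid, and a linear change of variables reduces to the Euclidean extremal; the Petty-projection/Blaschke--Santal\'o detour you sketch is unnecessary.
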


Theorem \ref{thm:AffineSobolev} was established in \cite{L-Y-Z-1} where the main ingredients used by the authors were the $\Lp$ Petty-Projection inequality and the solution of the $\Lp$ extension of the classical Minkowski problem. Our approach makes use of the $\Lp$ Busemann-Petty centroid inequality (see Section \ref{sec:not} for precise statements) but replaces the second one by the main result of Cordero, Nazaret and Villani \cite{CNV} on general sharp Sobolev inequalities obtained by means of a mass transportation method. The connection between the $\Lp$ extension of the Minkowski problem and sharp affine Sobolev inequalities was already studied in \cite{L-Y-Z-2}. Our approach also allows us to prove Theorem \ref{thm:AffineGN} with the same two ingredients mentioned above, whereas the known proofs presented in \cite{HSX} and \cite{Zhai} use rearrangements and the P\'olya-Szeg\"o principle.

The paper is organized as follows. In section \ref{sec:not} we fix some notations to be used in the paper and state the $\Lp$ Petty-Projection inequality. In section \ref{sec:log} we provide the proofs of Theorems \ref{thm:AffineLogSobolev} and \ref{thm:AffineGN}. Proof of Theorem \ref{thm:AffineSobolev} is carried out in the same spirit of the latter, so we omit the details. Both proofs heavily rely on a central tool involving convex bodies (Theorem \ref{thm:main_ineq}) and general sharp Sobolev and Gagliardo-Nirenberg inequalities proved in \cite{G} and \cite{CNV}. The proof of Theorem \ref{thm:main_ineq} requires three auxiliary lemmas. The statement of these inequalities involving abstract norms is included in this section as well for the sake of completeness. Finally, section \ref{sec:inft} is dedicated to the proof of Theorem \ref{thm:AffineLogSobolev-infty}.

\section{Notations and key tools}
\label{sec:not}

We recall that a convex body $K\subset\R^n$ is a convex compact subset of $\R^n$ with non-empty interior. As usual $\s^{n-1}$ and $\bo^n$ stand for the unit Euclidean sphere and ball, respectively. It is known that
\[ \vol(\bo^n) = \omega_n = \frac{\pi ^{\frac{n}{2}}}{\Gamma \left(\frac{n}{2}+1\right)}.\]

For $K\subset\mathbb R^n$ as before, its support function $h_K$ is defined as $$h_K(x)=\max\{\langle x, y\rangle\ :\ y\in K\}.$$ The support function, which describes the (signed) distances of supporting hyperplanes to the origin,  uniquely characterizes $K$. We also have the gauge $\|\cdot\|_K$ and radial $r_K(\cdot)$ functions of $K$ defined respectively as
\[\|x\|_K:=\inf\{\lambda>0\ :\  x\in \lambda K\},\quad x\in\R^n\setminus\{0\},\] \[r_K(x):=\max\{\lambda>0\ :\ \lambda x\in K\},\quad x\in\R^n\setminus\{0\}.\]
Clearly, $\|x\|_K=\frac{1}{r_K(x)}$.

For $K\subset \R^n$ we define its polar body, denoted by $K^\circ$, by
\[K^\circ:=\{x\in\R^n\ :\ \langle x,y \rangle\leq 1\quad \forall y\in K\}.\]
Evidently, $h_K=r_{K^{\circ}}$. It is also easy to see that $(\lambda K)^\circ=\frac{1}{\lambda}K^\circ$ for $\lambda>0$. A simple computation using polar coordinates shows that
\[\vol(K)=\frac{1}{n}\int_{\s^{n-1}}r_K^n(x)dx=\frac{1}{n}\int_{\s^{n-1}}\|x\|^{-n}_K dx.\]

For a given convex body $K\subset\R^n$ we find in the literature many associated bodies to it, in particular Lutwak and Zhang introduced \cite{L-Z} for a body $K$ its $\Lp$-centroid body denoted by $\Gamma_pK$. This body is defined by

\[h_{\Gamma_pK}^p(x):=\frac{1}{a_1\vol(K)}\int_{K}|\langle x,y\rangle|^p dy\quad \mbox{ for }x\in\R^n,\]
where

\[ a_1 = \frac{\omega_{n+p}}{\omega_2 \omega_n \omega_{p-1}}.\]

There are some other normalizations of the $\Lp$-centroid body in the literature, the previous one is made so that $\Gamma_p\bo^n=\bo^n$.

Inequalities (usually affine invariant) that compare the volume of a convex body $K$ and that of an associated body are common in the literature. For the specific case of $K$ and $\Gamma_pK$, Lutwak, Yang and Zhang \cite{L-Y-Z} (see also \cite{C-G} for an alternative proof) came up with what it is known as the $\Lp$ Busemann-Petty centroid inequality, namely
\[\vol(\Gamma_pK)\geq \vol(K).\]
This inequality is sharp if and only if $K$ is a $0$-symmetric ellipsoid. For a comprehensive survey on $\Lp$ Brunn-Minkowski theory and other topics within Convex Geometry we refer to \cite{Sch} and references therein.

\section{The strategy of proofs}\label{sec:log}

In this section we will illustrate how the idea behind the proof provides a direct, elementary and unified approach to the sharp affine functional inequalities stated in Theorems \ref{thm:AffineLogSobolev}, \ref{thm:AffineSobolev} and \ref{thm:AffineGN}. The general strategy for the proof of these theorems will be based on Theorem \ref{thm:main_ineq} below combined with some known results concerning each inequality that we present for completeness.

\pagebreak

\begin{center}
{\it Sharp affine $\Lp$ log-Sobolev inequalities}
\end{center}

Let $C:\R^n\rightarrow \R^+$ be a $q$-homogeneous function for $q>1$, that is, $C(x)$ satisfies
\[ C(\lambda x) = \lambda^q C(x),\quad \forall \lambda\geq 0,\quad x\in\R^n.\]
It is easy to prove that for $C$ as before, its Legendre transform, defined by
 \[C^*(x) = \sup_{y \in \R^n} \{ \langle x, y \rangle - C(y) \}\]
is $p$-homogeneous where $\frac{1}{p}+\frac{1}{q}=1$.

The starting point to prove Theorem \ref{thm:AffineLogSobolev} is the following result proved by Gentil (Theorem 1.1 in \cite{G}).

\begin{theorem}\label{thm:gentil}
Let $q>1$, $n \geq 1$, and $C$ a $q$-homogeneous, even, strictly convex function. Suppose that $\frac{1}{p}+\frac{1}{q}=1$. Then for any smooth function $f$ on $\R^n$ such that $\int |f|^pdx=1$, we have
\[\ent(|f|^p)=\int|f|^p\log|f|^pdx\leq \frac{n}{p}\log\left(\mathcal{L}_C\int C^*(\gradient f)dx\right),\]
where
\[\mathcal{L}_C=\frac{p^{p+1}}{ne^{p-1}\left(\int e^{-C(x)dx}\right)^{\frac{p}{n}}}.\]
Moreover, the above log-Sobolev inequality is sharp and equality holds if

\[\quad f(x)=a\exp\left(-bC(x-\bar{x})\right)\]
for $x \in \R^n$, where $a, b>0$ and $\bar{x}\in\R^n$ satisfy $a^{-p}=\int\exp\left(-pbC(x-\bar{x})\right))dx$.
\end{theorem}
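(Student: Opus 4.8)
The plan is to prove this by mass transportation, in the spirit of the Cordero-Erausquin--Nazaret--Villani argument that the paper will use for the Sobolev and Gagliardo--Nirenberg inequalities; Gentil's original route through the Prékopa--Leindler inequality and Hamilton--Jacobi equations is an equally good alternative, indicated at the end. First reduce, by a scaling and a density argument, to $f\ge 0$ smooth and rapidly decreasing with $\int f^p\,dx=1$, and set $h=f^p$. Let $g$ be the candidate extremal with its stated normalization and $h_0=g^p=a^p e^{-pbC}$ the corresponding probability density. Two elementary identities carry all the constants: $\int e^{-\mu C}\,dx=\mu^{-n/q}\int e^{-C}\,dx$, from $q$-homogeneity of $C$, and $\int C\,e^{-C}\,dx=\tfrac nq\int e^{-C}\,dx$, obtained by integrating $\operatorname{div}(x\,e^{-C(x)})$ over $\R^n$ and using Euler's relation $\langle x,\gradient C(x)\rangle=qC(x)$. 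They give $\int C\,h_0\,dx=\tfrac{n}{pqb}$ and reduce the verification that $g$ produces equality to a short computation; this is exactly the ``equality holds if'' part of the statement, so it needs nothing more.

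For the inequality itself, let $T$ be the optimal transport map sending $h\,dx$ onto $h_0\,dx$ for the cost $c(x,y)=C(x-y)$ (this is where the anisotropy of $C$ is accommodated). It satisfies, a.e., a Monge--Ampère equation $h(x)=h_0(T(x))\det DT(x)$ and a trace-type inequality adapted to the cost, reducing for the quadratic cost to $n\det(DT)^{1/n}\le\operatorname{tr}(DT)$. Taking logarithms in the Monge--Ampère equation, multiplying by $h$, integrating, and evaluating $\int h(x)\,C(T(x))\,dx=\int C\,h_0\,dx=\tfrac{n}{pqb}$ by the change of variables gives
\[
\ent(h)=p\log a-\tfrac nq+\int h\,\log\det DT\,dx.
\]
Now raise the Monge--Ampère equation to the power $1/n$, apply the trace inequality, multiply through by $f^{p(1-1/n)}$ and integrate by parts (legitimate since the distributional divergence of $T$ dominates its pointwise part and boundary contributions vanish by the decay of $f$); apply Young's inequality $\langle u,v\rangle\le C^*(u)+C(v)$ for the Legendre pair, with $C^*$ even, carrying a free dilation parameter $\rho>0$; and use Jensen, via concavity of $\log$ and the fact that $h\,dx$ is a probability measure, to linearize the remaining logarithm. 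Optimizing over $\rho$, the two identities above force the constant to come out exactly $\mathcal{L}_C$, giving $\ent(|f|^p)\le\tfrac np\log\bigl(\mathcal{L}_C\int C^*(\gradient f)\,dx\bigr)$.

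I expect the main obstacle to be the anisotropic cost in this last step: identifying the correct Monge--Ampère equation and trace/divergence inequality for the cost $C(x-y)$, and arranging the integration by parts and the Young step so that the output is precisely $\int C^*(\gradient f)\,dx$ with the \emph{sharp} constant and not merely up to a dimensional factor --- for radial $C$ this is the familiar Brenier-map computation, but the general case needs the regularity theory of $c$-optimal maps. Alternatively one avoids transport entirely and follows Gentil: from the Prékopa--Leindler inequality one extracts a one-parameter family of estimates for the Hopf--Lax semigroup $Q_tg(x)=\inf_{y}\{g(y)+tC((x-y)/t)\}$, which solves $\partial_t u+C^*(\gradient u)=0$, and recovers the sharp log-Sobolev inequality by differentiating the family at $t=0$; the extremals then emerge as the Prékopa--Leindler equality cases.
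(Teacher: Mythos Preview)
The paper does not prove this theorem. It is quoted verbatim as Theorem~1.1 of Gentil~\cite{G} and used as a black box; the paper's contribution begins only after this statement, with the construction of the function $C_f$ and the convex bodies $K_f,L_f$. There is therefore no ``paper's own proof'' against which to compare your attempt.

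For what it is worth: your second alternative---Pr\'ekopa--Leindler, the Hopf--Lax semigroup $Q_t g(x)=\inf_y\{g(y)+tC((x-y)/t)\}$ solving $\partial_t u+C^*(\nabla u)=0$, and differentiation at $t=0$---is precisely Gentil's argument in the cited reference, so that sketch is on solid ground. Your primary sketch via mass transport is more speculative: for an anisotropic cost $C(x-y)$ the optimal map is no longer the Brenier gradient of a convex function, $DT$ need not be symmetric, and the clean arithmetic--geometric step $n(\det DT)^{1/n}\le\operatorname{tr}(DT)=\operatorname{div}T$ is not available as written. You flag this yourself, but it is a genuine gap rather than a technicality; getting the \emph{sharp} constant that way would require either reverting to the Brenier map (and then the Young step must absorb the anisotropy, which changes your bookkeeping) or invoking the regularity theory of $c$-optimal maps in a way that is not standard for this inequality.
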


Before stating the referred Theorem \ref{thm:main_ineq}, some preliminary results will be necessary and a little bit of notation should be introduced.

\begin{definition}
Let $f$ be a non-zero real smooth function on $\R^n$ with compact support.
We define
\[C_f^*(x) := \int_{\s^{n-1}}\|\gradient_\xi f\|_p^{-n-p}|\langle x, \xi\rangle|^{p}d\xi\]
and consider its Legendre transform $C_f$ .
\end{definition}

\begin{prop}
\label{prop:Cdef}
Let $f$ be a non-zero real smooth function on $\R^n$ with compact support.
The function $C_f^*$ given in the definition above is well defined, strictly convex, $p$-homogeneous and $C_f^*(x) = 0$ if and only if $x=0$.
\end{prop}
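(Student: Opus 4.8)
The plan is to verify the four asserted properties of $C_f^*(x) = \int_{\s^{n-1}}\|\gradient_\xi f\|_p^{-n-p}|\langle x,\xi\rangle|^p\,d\xi$ in turn, after first observing that the weight $w(\xi) := \|\gradient_\xi f\|_p^{-n-p}$ is a well-defined, positive, bounded, even function on $\s^{n-1}$. Positivity and finiteness of $w$ follow because $f$ is smooth with compact support and non-zero: for each $\xi$, $\|\gradient_\xi f\|_p = \left(\int_{\R^n}|\gradient f(x)\cdot\xi|^p\,dx\right)^{1/p}$ is finite (smoothness plus compact support) and strictly positive (if $\gradient f\cdot\xi\equiv 0$ then $f$ would be constant along the $\xi$-direction, contradicting compact support and $f\not\equiv 0$); continuity of $\xi\mapsto\|\gradient_\xi f\|_p$ on the compact sphere then gives uniform bounds $0<c_1\le w(\xi)\le c_2<\infty$. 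Consequently the integral defining $C_f^*(x)$ converges for every $x$, which settles well-definedness.

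Next I would handle $p$-homogeneity, which is immediate: $|\langle\lambda x,\xi\rangle|^p=\lambda^p|\langle x,\xi\rangle|^p$ for $\lambda\ge 0$, so $C_f^*(\lambda x)=\lambda^p C_f^*(x)$ by linearity of the integral. For the vanishing claim, clearly $C_f^*(0)=0$; conversely, if $C_f^*(x)=0$ with $x\ne 0$, then since the integrand $w(\xi)|\langle x,\xi\rangle|^p$ is continuous and non-negative on $\s^{n-1}$, it must vanish identically, forcing $\langle x,\xi\rangle=0$ for all $\xi\in\s^{n-1}$ (using $w>0$), hence $x=0$, a contradiction. This also shows $C_f^*(x)>0$ for $x\ne 0$.

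The main substance is strict convexity. I would argue that $C_f^*$ is convex as an average (with positive weights $w(\xi)\,d\xi$) of the functions $x\mapsto|\langle x,\xi\rangle|^p$, each of which is convex for $p\ge 1$ (composition of the convex function $t\mapsto|t|^p$ with the linear map $x\mapsto\langle x,\xi\rangle$; convexity of $|t|^p$ for $p>1$ is standard). For strict convexity I would take $x\ne y$ and $\lambda\in(0,1)$ and show the Jensen inequality is strict. The one-variable function $t\mapsto|t|^p$ is strictly convex on $\R$ for $p>1$, so $|\langle \lambda x+(1-\lambda)y,\xi\rangle|^p \le \lambda|\langle x,\xi\rangle|^p+(1-\lambda)|\langle y,\xi\rangle|^p$ with equality only when $\langle x,\xi\rangle=\langle y,\xi\rangle$, i.e. $\langle x-y,\xi\rangle=0$. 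Since $x-y\ne 0$, the set of such $\xi$ is a great subsphere of $\s^{n-1}$, which has measure zero; on its complement (a set of positive measure, where $w$ is bounded below by $c_1>0$) the integrand inequality is strict, so integrating against $w(\xi)\,d\xi$ yields the strict inequality $C_f^*(\lambda x+(1-\lambda)y)<\lambda C_f^*(x)+(1-\lambda)C_f^*(y)$. The only mildly delicate point here is the measure-zero observation for the equatorial subsphere and the need for the uniform lower bound on $w$ to promote pointwise strictness to strictness of the integral; both are routine. Finally, evenness of $C_f^*$ (needed later for the application of Theorem~\ref{thm:gentil}) follows since $|\langle -x,\xi\rangle|^p=|\langle x,\xi\rangle|^p$, though it is not among the listed conclusions.
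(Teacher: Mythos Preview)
Your proof is correct and follows essentially the same approach as the paper's: establish positivity and continuity of $\xi\mapsto\|\gradient_\xi f\|_p$ to get well-definedness, and deduce strict convexity from the strict convexity of $t\mapsto|t|^p$ for $p>1$. Your argument is in fact slightly more careful than the paper's, which writes a pointwise strict inequality inside the integral without noting (as you do) that it fails on the measure-zero equatorial set $\{\xi:\langle x_1-x_2,\xi\rangle=0\}$ and that strictness of the integrated inequality requires exactly this observation.
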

\begin{proof}
We only prove the good definition and strict convexity of $C_f^*$, since the other statements are evident.

First note that the equality $\|\gradient_\xi f\|_p = 0$ implies $\frac {\partial f}{\partial \xi} (x) = 0$ for every $x \in \R^n$ which contradicts the fact that $f$ is non-zero and has compact support.
Then, continuity with respect to $\xi$ gives $\|\gradient_\xi f\|_p \geq \eps > 0$ for every $\xi \in \s^{n-1}$ which proves the good definition.

For the strict convexity, take $x_1, x_2 \in \R^n$ with $x_1 \neq x_2$ and $\lambda \in (0,1)$. We have
\begin{eqnarray*}
 C_f^*\left(\lambda x_1 + (1-\lambda) x_2 \right) &=& \int_{\s^{n-1}}\|\gradient_\xi f\|_p^{-n-p}\left|\lambda {\langle x_1,  \xi\rangle} + (1-\lambda) {\langle x_2,  \xi\rangle} \right|^{p} d\xi\\
 &<& \int_{\s^{n-1}}\|\gradient_\xi f\|_p^{-n-p}\left( \lambda |\langle x_1,  \xi\rangle|^p + (1-\lambda) |\langle x_2,  \xi\rangle|^p \right) d\xi\\
 &=& \lambda C_f^*(x_1) + (1-\lambda) C_f^*(x_2) .
\end{eqnarray*}
\end{proof}

Denote by
\[L_f=\{\xi\in\R^n\ :\ \|\gradient_\xi f\|_p\leq 1\},\]
\[K_f = \{x \in \R^n\ :\ C_f(x) \leq 1 \}\, .\]
We note that $K_f$ and $L_f$ are convex bodies with gauge functions $\|x\|_{K_f}=C_f(x)^{\frac{1}{q}}$ and $\|\xi \|_{L_f} = \|\gradient_\xi f\|_p$ and radial functions $r_{K_f}(x) = C_f(x)^{-\frac{1}{q}}$ and $r_{L_f}(\xi ) = \|\gradient_\xi f\|_p^{-1}$.

For convenience, we set
\[Z_p(f)=\left(\int_{\s^{n-1}}\|\gradient_\xi f\|_p^{-n}d\xi\right)^{-\frac{1}{n}}\]
and notice we have the identities $n \vol(L_f) = Z_p(f)^{-n}$ and $\mathcal{E}_p(f) = c_{n,p} Z_p(f)$.

\begin{lemma}\label{eqn_C_grad_f}
Let $f$ be a non-zero real smooth function on $\R^n$ with compact support.
We have
\[
\int_{\R^n}C_f^*(\gradient f(x))dx = n \vol(L_f) .
\]

\end{lemma}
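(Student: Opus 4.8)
The plan is to compute the left-hand side directly: substitute the definition of $C_f^*$, interchange the order of integration, and evaluate the inner integral. Writing $\langle \gradient f(x),\xi\rangle = \gradient_\xi f(x)$, the definition of $C_f^*$ gives
\[
\int_{\R^n}C_f^*(\gradient f(x))\,dx = \int_{\R^n}\int_{\s^{n-1}}\|\gradient_\xi f\|_p^{-n-p}\,|\gradient_\xi f(x)|^{p}\,d\xi\,dx .
\]

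The first thing I would do is justify Fubini's theorem for this double integral. Since $f$ is smooth with compact support, $\gradient_\xi f(x)$ is bounded uniformly in $(x,\xi)\in\R^n\times\s^{n-1}$ and vanishes outside a fixed compact set in the $x$ variable; and, as shown in the proof of Proposition \ref{prop:Cdef}, continuity of $\xi\mapsto\|\gradient_\xi f\|_p$ together with its strict positivity yields $\|\gradient_\xi f\|_p\geq\eps>0$ for all $\xi\in\s^{n-1}$, so the weight $\|\gradient_\xi f\|_p^{-n-p}$ is bounded on the (compact) sphere. Hence the integrand is absolutely integrable and the order of integration may be exchanged. After exchanging, the inner integral is $\int_{\R^n}|\gradient_\xi f(x)|^{p}\,dx = \|\gradient_\xi f\|_p^{p}$ by the very definition of the $L_p$ norm, so
\[
\int_{\R^n}C_f^*(\gradient f(x))\,dx = \int_{\s^{n-1}}\|\gradient_\xi f\|_p^{-n-p}\,\|\gradient_\xi f\|_p^{p}\,d\xi = \int_{\s^{n-1}}\|\gradient_\xi f\|_p^{-n}\,d\xi .
\]
By the definition of $Z_p(f)$ and the identity $n\vol(L_f)=Z_p(f)^{-n}$ recorded just above the lemma, the last integral equals $n\vol(L_f)$, which completes the proof.

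There is essentially no serious obstacle here: the statement is a one-line application of Fubini's theorem once the definitions are unwound. The only point that genuinely requires a word of justification is the integrability needed to swap the integrals, and this is handled exactly as in Proposition \ref{prop:Cdef} — compact support plus smoothness of $f$ give boundedness of $\gradient_\xi f$, and the uniform lower bound $\|\gradient_\xi f\|_p\geq\eps>0$ controls the singular-looking weight on the compact sphere.
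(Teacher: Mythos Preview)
Your proof is correct and follows exactly the same route as the paper's own proof: substitute the definition of $C_f^*$, swap the order of integration, recognize the inner integral as $\|\gradient_\xi f\|_p^p$, and conclude via $\int_{\s^{n-1}}\|\gradient_\xi f\|_p^{-n}\,d\xi = n\vol(L_f)$. The only difference is that you spell out the Fubini justification (compact support, boundedness of the weight on the sphere), which the paper leaves implicit.
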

\begin{proof}
\begin{eqnarray*}
\int_{\R^n}C_f^*(\gradient f(x))dx &=& \int_{\R^n}\int_{\s^{n-1}}\|\gradient_\xi f\|_p^{-n-p}|\langle \gradient f(x),\xi\rangle|^{p}d\xi \; dx\\
&=&\int_{\s^{n-1}}\int_{\R^n}\|\gradient_\xi f\|_p^{-n-p}|\langle \gradient f(x),\xi\rangle|^{p}dx \; d\xi \\
&=&\int_{\s^{n-1}}\|\gradient_\xi f\|_p^{-n-p}\|\gradient_\xi f\|_p^{p}d\xi\\
&=&\int_{\s^{n-1}}\|\gradient_\xi f\|_p^{-n}d\xi = n \vol(L_f).
\end{eqnarray*}
\end{proof}

\begin{lemma}\label{eqn_C_support_centroid}
Let $f$ be a non-zero real smooth function on $\R^n$ with compact support.
We have
\[C_f^*(v) = (n+p) a_1 \vol(L_f) h_{\Gamma_pL_f}^p(v).\]
\end{lemma}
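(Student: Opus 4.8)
The plan is to compute $C_f^*(v)$ directly from its definition and recognize the resulting integral as the $\Lp$-centroid body support function of $L_f$, after expressing the relevant integral over the sphere as an integral over $L_f$ in polar coordinates.

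First I would write out $C_f^*(v) = \int_{\s^{n-1}} \|\gradient_\xi f\|_p^{-n-p} |\langle v, \xi \rangle|^p \, d\xi$ and observe that, since $r_{L_f}(\xi) = \|\gradient_\xi f\|_p^{-1}$, the factor $\|\gradient_\xi f\|_p^{-n-p}$ equals $r_{L_f}(\xi)^{n+p} = r_{L_f}(\xi)^n \cdot r_{L_f}(\xi)^p$. The idea is then to absorb $r_{L_f}(\xi)^n \, d\xi$ into the polar-coordinate volume element on $L_f$. Concretely, for any $p$-homogeneous integrand $g$ one has the polar-coordinate identity
\[
\int_{L_f} g(y) \, dy = \int_{\s^{n-1}} \int_0^{r_{L_f}(\xi)} g(t\xi)\, t^{n-1} \, dt \, d\xi = \int_{\s^{n-1}} g(\xi) \frac{r_{L_f}(\xi)^{n+p}}{n+p} \, d\xi,
\]
using $g(t\xi) = t^p g(\xi)$ to evaluate the inner integral. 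Applying this with $g(y) = |\langle v, y \rangle|^p$ gives
\[
\int_{L_f} |\langle v, y \rangle|^p \, dy = \frac{1}{n+p} \int_{\s^{n-1}} |\langle v, \xi \rangle|^p \, r_{L_f}(\xi)^{n+p} \, d\xi = \frac{1}{n+p} \, C_f^*(v).
\]

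Next I would invoke the definition of the $\Lp$-centroid body from Section \ref{sec:not}, namely $h_{\Gamma_p L_f}^p(v) = \frac{1}{a_1 \vol(L_f)} \int_{L_f} |\langle v, y \rangle|^p \, dy$. Solving for the integral yields $\int_{L_f} |\langle v, y\rangle|^p \, dy = a_1 \vol(L_f) \, h_{\Gamma_p L_f}^p(v)$, and combining with the previous display gives $C_f^*(v) = (n+p)\, a_1 \vol(L_f)\, h_{\Gamma_p L_f}^p(v)$, which is exactly the claim.

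I do not anticipate a serious obstacle here; the only point requiring a little care is the justification of the polar-coordinate identity for the homogeneous integrand, which is routine (one uses that $L_f$ is a convex body, so $r_{L_f}$ is finite and positive on $\s^{n-1}$, and the integrability is immediate from compactness). One should also double-check the normalization constant $a_1 = \omega_{n+p}/(\omega_2 \omega_n \omega_{p-1})$ is handled consistently, but since the statement keeps $a_1$ symbolic this reduces to quoting the definition verbatim. The exchange of the order of integration implicit in the polar-coordinate formula is valid since the integrand is nonnegative (Tonelli).
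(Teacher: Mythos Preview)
Your proposal is correct and follows essentially the same approach as the paper: both arguments convert the integral $\int_{L_f}|\langle v,y\rangle|^p\,dy$ to polar coordinates, use the $p$-homogeneity of the integrand to evaluate the radial integral as $r_{L_f}(\xi)^{n+p}/(n+p)$, and then identify the resulting spherical integral with $C_f^*(v)$ via $r_{L_f}(\xi)=\|\gradient_\xi f\|_p^{-1}$. The only cosmetic difference is direction---you start from $C_f^*(v)$ and arrive at the centroid integral, while the paper starts from $a_1\vol(L_f)h_{\Gamma_p L_f}^p(v)$ and arrives at $C_f^*(v)/(n+p)$.
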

\begin{proof}
We compute
\begin{eqnarray*}
a_1\vol(L_f) h_{\Gamma_p L_f}^p(v) &=& \int_{L_f} |\langle v, x \rangle |^p dx
= \int_{\s^{n-1}} \int_0^{r_{L_f}(\xi)} r^{n-1} |\langle v, r \xi \rangle|^p dr\; d\xi\\
&=& \int_{\s^{n-1}} |\langle v, \xi \rangle|^p \int_0^{r_{L_f}(\xi)} r^{n+p-1} dr\; d\xi\\
&=& \int_{\s^{n-1}} |\langle v, \xi \rangle|^p \frac {r_{L_f}(\xi)^{n+p}}{n+p}  d\xi
= C_f^*(v) \frac 1{n+p}.
\end{eqnarray*}

\end{proof}

\begin{lemma}\label{eqn_legendre_polar_equivalence}
Let $f$ be a non-zero real smooth function on $\R^n$ with compact support.
We have
\[ K_f^\circ = \{x \in \R^n\ :\ C_f^*(x) \leq \ell_q\}, \]
where $\ell_q = q^{-\frac{p}{q}} p^{-1}$.
\end{lemma}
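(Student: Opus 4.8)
The plan is to establish the sharper identity
\[
C_f^*(y) = \ell_q\, h_{K_f}(y)^p \qquad \text{for every } y \in \R^n ,
\]
after which the lemma is immediate: $C_f^*(y) \leq \ell_q$ is equivalent to $h_{K_f}(y)^p \leq 1$, hence to $h_{K_f}(y) \leq 1$, and by the definition of the polar body together with $h_{K_f}(y) = \max_{x \in K_f}\langle x, y\rangle$ this last inequality says precisely $y \in K_f^\circ$.

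To prove the identity I would first note that, even though $C_f$ was introduced as the Legendre transform of $C_f^*$, one also has $C_f^* = (C_f)^*$: by Proposition \ref{prop:Cdef} the function $C_f^*$ is finite and convex on $\R^n$, hence continuous, so the Fenchel--Moreau theorem gives $C_f^* = (C_f^*)^{**} = (C_f)^*$. Next I would use the fact (already recorded in the paragraph introducing $K_f$) that $C_f$ is the $q$-th power of the gauge of $K_f$, that is $C_f(x) = \|x\|_{K_f}^q$, and compute $(C_f)^*$ by reducing to one variable: writing $x = t u$ with $t = \|x\|_{K_f}$ and $\|u\|_{K_f} = 1$, we get $\langle x, y\rangle - C_f(x) = t\langle u, y\rangle - t^q$. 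Maximizing over $t \geq 0$ gives a positive value only when $\langle u, y\rangle > 0$, attained at $t^{q-1} = \langle u, y\rangle / q$; using $1 - \tfrac1q = \tfrac1p$ the optimal value is $\tfrac1p\, t\langle u, y\rangle = \tfrac1p\, q^{-(p-1)}\langle u, y\rangle^p$, since $\tfrac1{q-1} = p-1$. Taking the supremum over such $u$ replaces $\sup_u \langle u, y\rangle$ by $h_{K_f}(y)$, so $(C_f)^*(y) = \tfrac1p\, q^{-(p-1)} h_{K_f}(y)^p$, and as $p - 1 = p/q$ the constant equals $\ell_q = q^{-p/q} p^{-1}$, as claimed.

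There is no genuine obstacle here: it is the standard computation of the Legendre transform of the $q$-th power of a norm, combined with Fenchel--Moreau. The step that most deserves care is the passage from $C_f$ to $(C_f)^*$, where the continuity and convexity of $C_f^*$ supplied by Proposition \ref{prop:Cdef} are exactly what is needed, together with the bookkeeping of the constant; the conjugacy relations $p + q = pq$ and $p - 1 = p/q$ are what make $\ell_q$ come out precisely as stated. One could alternatively bypass the explicit formula and prove the two inclusions directly from $C_f(x) = \sup_z(\langle x, z\rangle - C_f^*(z))$ via the same one-variable optimization, but the route through the identity for $C_f^*$ is the shortest.
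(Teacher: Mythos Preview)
Your argument is correct. You establish the sharper pointwise identity $C_f^*(y)=\ell_q\,h_{K_f}(y)^p$ by invoking Fenchel--Moreau (so that $C_f^*=(C_f)^*$) and then carrying out the standard one--variable optimisation that computes the Legendre transform of $\|\cdot\|_{K_f}^q$. The paper takes a genuinely different, more hands-on route: it never writes down that identity nor appeals to biconjugacy, but instead proves the two inclusions directly from the observation $\ell_q=\max_{t\geq 0}\{t^{1/q}-t\}$ together with the $q$-homogeneity of $C_f$. Your approach has the advantage of yielding an explicit formula for $C_f^*$ in terms of $h_{K_f}$ (which is in fact implicitly used later in Section~\ref{sec:inft}), at the modest cost of quoting Fenchel--Moreau; the paper's proof is slightly more elementary and self-contained. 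One small point worth making explicit in your write-up is that $K_f$ contains the origin (since $C_f(0)=0$) and is symmetric (since $C_f^*$ is even), so $h_{K_f}\geq 0$ and the passage from $\sup_u\langle u,y\rangle^p$ to $h_{K_f}(y)^p$ is unambiguous.
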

\begin{proof}
Firstly, note that $\ell_q = \max_{t \geq 0}\{t^{\frac{1}{q}}-t\}$.

Take $x \in K_f^\circ$. By definition of $K_f$, $\langle x, z \rangle \leq 1$ whenever $C_f(z) \leq 1$.
Since $C_f$ is $q$-homogeneous, we have for any $y \in \R^n$ that $C_f( C_f(y)^{-\frac{1}{q}} \; y ) = 1$, thus
\[\langle x, C_f(y)^{-\frac{1}{q}} \; y \rangle \leq 1\]
and
\[\langle x,  y  \rangle -C_f(y) \leq C_f(y)^{\frac{1}{q}} - C_f(y)\leq \ell_q,\]
so that $C_f^*(x) \leq \ell_q$.

Now take $x \in \R^n \setminus K_f^\circ$.
By definition of $K_f^\circ$, there exists $y \neq 0$ such that
\[\langle x, y \rangle > 1 \geq C_f(y),\]
so that
\[\langle x,  y  \rangle > C_f(y)^{\frac{1}{q}}.\]
For any $t > 0$, we have
\[\langle x, t y  \rangle > (t^q C_f(y)) ^{\frac{1}{q}}\]
and
\[\langle x, t y  \rangle - C_f(t y) > ( t^q C_f(y) )^{\frac{1}{q}} - t^q C_f(y).\]
Since $C_f(y) > 0$, we may choose $t > 0$ such that the right-hand side is maximized.
Thus, we get $C_f^*(x) > \ell_q$.
\end{proof}

We are in conditions to finally state our key theorem announced above.
\begin{theorem}
\label{thm:main_ineq}
Let $f$ be a non-zero real smooth function on $\R^n$ with compact support.
\begin{equation}
\label{eqn_Lconst}
\vol(K_f)^{-\frac pn} \int_{\R^n}C_f^*(\gradient f(x))dx \leq a_2 Z_p(f)^p,
\end{equation}
where $a_2 = \frac {\ell_q n^{\frac {n+p}n } }{ a_1 (n+p)   } $. Moreover, if equality holds in \eqref{eqn_Lconst} then there exists an invertible matrix $M$ such that
\begin{equation}
\label{eqn_extre}
\mathcal{E}_p(f)^p = \det(M)^{\frac {p - n}n} \int |\gradient g|^p dx,
\end{equation}
where $g(x) = f(M^{-1} x)$.

\end{theorem}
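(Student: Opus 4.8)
The plan is to identify $K_f$, up to a scalar dilation, with the $\Lp$-centroid body $\Gamma_p L_f$ of $L_f$, and then to invoke the $\Lp$ Busemann-Petty centroid inequality for $L_f$. First I would dispose of the integral on the left of \eqref{eqn_Lconst}: by Lemma \ref{eqn_C_grad_f} it equals $n\vol(L_f)$, so everything reduces to a lower bound for $\vol(K_f)$. For that I would combine Lemmas \ref{eqn_C_support_centroid} and \ref{eqn_legendre_polar_equivalence}. Set $\beta=(n+p)a_1\vol(L_f)$. Lemma \ref{eqn_C_support_centroid} gives $C_f^*=\beta\,h_{\Gamma_pL_f}^p$, so $\{x:C_f^*(x)\le 1\}=\beta^{-1/p}(\Gamma_pL_f)^\circ$ (using $h_K(y)\le 1\iff y\in K^\circ$); since $C_f^*$ is $p$-homogeneous (Proposition \ref{prop:Cdef}), Lemma \ref{eqn_legendre_polar_equivalence} reads $K_f^\circ=(\ell_q/\beta)^{1/p}(\Gamma_pL_f)^\circ$, and taking polars yields
\[
K_f=\left(\frac{\beta}{\ell_q}\right)^{1/p}\Gamma_pL_f,
\qquad\text{hence}\qquad
\vol(K_f)=\left(\frac{\beta}{\ell_q}\right)^{n/p}\vol(\Gamma_pL_f).
\]

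Applying the $\Lp$ Busemann-Petty centroid inequality to the $0$-symmetric convex body $L_f$ gives $\vol(\Gamma_pL_f)\ge\vol(L_f)$, so $\vol(K_f)\ge\big((n+p)a_1/\ell_q\big)^{n/p}\vol(L_f)^{1+n/p}$, i.e. $\vol(K_f)^{-p/n}\le\frac{\ell_q}{(n+p)a_1}\vol(L_f)^{-1-p/n}$. Multiplying by $\int C_f^*(\gradient f(x))\,dx=n\vol(L_f)$ and using $n\vol(L_f)=Z_p(f)^{-n}$ (so that $\vol(L_f)^{-p/n}=n^{p/n}Z_p(f)^p$) gives in one line
\[
\vol(K_f)^{-\frac pn}\int_{\R^n}C_f^*(\gradient f(x))\,dx\;\le\;\frac{n\,\ell_q}{(n+p)a_1}\,\vol(L_f)^{-\frac pn}\;=\;\frac{\ell_q\,n^{\frac{n+p}{n}}}{a_1(n+p)}\,Z_p(f)^p,
\]
which is \eqref{eqn_Lconst} with the stated $a_2$.

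For the equality statement I would argue as follows. Equality in \eqref{eqn_Lconst} forces equality in $\vol(\Gamma_pL_f)\ge\vol(L_f)$, hence $L_f$ is a $0$-symmetric ellipsoid; write $L_f=M^{-1}\bo^n$ with $M$ symmetric positive definite, so that $\|\gradient_\xi f\|_p=\|\xi\|_{L_f}=|M\xi|$ for all $\xi\in\R^n$. Put $g(x)=f(M^{-1}x)$. A change of variables in the definition of $\|\cdot\|_p$, using $\gradient g(x)=M^{-1}\gradient f(M^{-1}x)$, gives $\|\gradient_\xi g\|_p^p=\det(M)\,\|\gradient_{M^{-1}\xi}f\|_p^p=\det(M)\,|\xi|^p$, so $L_g=\det(M)^{-1/p}\bo^n$ is a Euclidean ball and $\Gamma_pL_g=L_g$. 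Then Lemma \ref{eqn_C_support_centroid} applied to $g$ reads $C_g^*(v)=(n+p)a_1\vol(L_g)\det(M)^{-1}|v|^p$, and substituting this into Lemma \ref{eqn_C_grad_f} applied to $g$ (the factor $\vol(L_g)$ cancels) evaluates $\int_{\R^n}|\gradient g|^p\,dx=n\det(M)/((n+p)a_1)$. On the other hand, $Z_p(f)^{-n}=n\vol(L_f)=n\det(M)^{-1}\omega_n$, so $\mathcal{E}_p(f)^p=c_{n,p}^pZ_p(f)^p=c_{n,p}^p(n\omega_n)^{-p/n}\det(M)^{p/n}$. Comparing the two and using the elementary identity $c_{n,p}^p(n\omega_n)^{-p/n}=n/((n+p)a_1)$, which reduces to $(n+p)\omega_{n+p}=2\pi\,\omega_{n+p-2}$, yields $\mathcal{E}_p(f)^p=\det(M)^{(p-n)/n}\int|\gradient g|^p\,dx$, i.e. \eqref{eqn_extre}.

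There is no single deep step here; the main work is bookkeeping. One has to move carefully between $C_f^*$, its Legendre transform $C_f$, and the polar bodies $K_f^\circ$ and $(\Gamma_pL_f)^\circ$, keeping all homogeneity exponents straight, and in the equality case one must carry the Jacobian $\det(M)$ correctly through the substitution $x\mapsto M^{-1}x$ and match the normalizing constants $c_{n,p}$, $a_1$, $\ell_q$ and the $\omega_k$'s. It is also worth emphasizing that equality forces $L_f$ (and not $K_f$) to be an ellipsoid — this is precisely what makes the reduction to the ``round'' function $g$ possible.
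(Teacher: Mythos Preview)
Your proof is correct and follows essentially the same route as the paper: identify $K_f$ as a dilate of $\Gamma_p L_f$ via Lemmas \ref{eqn_C_support_centroid} and \ref{eqn_legendre_polar_equivalence}, apply the $\Lp$ Busemann--Petty centroid inequality to $L_f$, and in the equality case parametrize the resulting ellipsoid by a matrix $M$ and reduce to the same constant identity (your $c_{n,p}^p(n\omega_n)^{-p/n}=n/((n+p)a_1)$ is equivalent to the paper's $\frac{c_{n,p}^p}{a_2}\omega_n^{-p/n}\ell_q=1$). The only cosmetic difference is that in the equality case the paper parametrizes $K_f$ directly via $C_f(x)=\frac1q|Mx|^q$ and computes $C_f^*$, whereas you parametrize $L_f=M^{-1}\bo^n$ and reapply the lemmas to $g$; both lead to the same conclusion. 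One small wording caveat: your parenthetical ``(and not $K_f$)'' is misleading, since $K_f$, being a dilate of $\Gamma_pL_f=L_f$, is an ellipsoid as well.
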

\begin{proof}

In view of Lemma \ref{eqn_C_grad_f} and the identity $n \vol(L_f) = Z_p(f)^{-n}$, \eqref{eqn_Lconst} becomes
\[
\vol(K_f)^{-\frac pn} \leq a_2Z_p(f)^{n+p} = a_2\left(n \vol(L_f)\right)^{-\frac{p+n}{n}}.
\]

Note that since $h_{\Gamma_p L_f}^p$ is $p$-homogeneous, we have
\[(n+p)\vol(L_f)a_1h_{\Gamma_pL_f}^p(x)\leq \ell_q\]
if and only if

\[h_{\Gamma_pL_f}^p\left(\frac{\left((n+p)\vol(L_f)a_1\right)^{\frac{1}{p}} x}{\ell_q^{\frac{1}{p}}}\right) \leq 1.\]

Putting together Lemmas \ref{eqn_C_support_centroid} and \ref{eqn_legendre_polar_equivalence}, we obtain
\begin{eqnarray*}
K_f^\circ &=& \ell_q^{\frac{1}{p}} \left((n+p)\vol(L_f)a_1\right)^{-\frac{1}{p}}\{x\in \R^n :\ h_{\Gamma_p L_f}(x)\leq 1\}\\
&=& \ell_q^{\frac{1}{p}} \left((n+p)\vol(L_f)a_1\right)^{-\frac{1}{p}}(\Gamma_pL_f)^\circ
\end{eqnarray*}
and
\[K_f=((n+p)\vol(L_f)a_1)^{\frac{1}{p}} \ell_q^{-\frac{1}{p}}\Gamma_p L_f.\]

Finally, we compute
\begin{eqnarray*}
 \vol(K_f)^{-\frac pn}
&=& \vol\left( (\vol(L_f)a_1(n+p))^{\frac{1}{p}}\ell_q^{-\frac{1}{p}}\Gamma_p L_f \right)^{-\frac{p}{n}}\\
&=& \left((\vol(L_f) a_1 (n+p))^{\frac{n}{p}}\ell_q^{-\frac{n}{p}}\vol(\Gamma_p L_f)\right)^{-\frac{p}{n}}          \\
&=& (\vol(L_f) a_1 (n+p))^{-1}\ell_q \vol(\Gamma_p L_f)^{-\frac{p}{n}}\\
&\leq& \frac {\ell_q}{ a_1 (n+p) }\vol(L_f)^{- \frac {n+p}n } = a_2 (n \vol(L_f))^{- \frac {n+p}n }.
\end{eqnarray*}
where the last inequality is a consequence of the $\Lp$ Busemann-Petty centroid inequality, proving \eqref{eqn_Lconst}.

Now assume equality holds in \eqref{eqn_Lconst}, then we have $\vol(\Gamma_p L_f) = \vol(L_f)$ which in view of the equality case for the $\Lp$ Busemann-Petty centroid inequality, implies that $L_f$ and thus $K_f$ are ellipsoids.
Consider the invertible matrix $M \in \R^{n\times n}$ such that $C_f(x) = \frac 1q |M x|^q$, then its Legendre transform is $C_f^*(v) = \frac 1p |{M^{-1}}^T v|^p$.
Using the previous formula for $C_f$, we compute $\vol(K_f) = \det(M)^{-1} \omega_n q^{\frac nq}$.
Then equality in \eqref{eqn_Lconst} reduces to
\[a_2 Z_p(f)^p = \left( \det(M)^{-1} \omega_n q^{\frac nq} \right)^{-\frac{p}{n}} \int \frac 1p |{M^{-1}}^T \gradient f(x)|^p dx\] and

\begin{eqnarray*}
\mathcal{E}_p(f)^p &=& \frac {c_{n,p}^p}{a_2}\left( \det(M)^{-1} \omega_n q^{\frac nq} \right)^{-\frac{p}{n}} \int \frac 1p |{M^{-1}}^T \gradient f(x)|^p dx\\
&=& \frac {c_{n,p}^p}{a_2}\omega_n^{-\frac pn} \ell_q \det(M)^{\frac{p}{n}-1} \int |{M^{-1}}^T \gradient f(M^{-1} y)|^p dy\\
&=&  \frac {c_{n,p}^p}{a_2}\omega_n^{-\frac pn} \ell_q \det(M)^{\frac{p-n}n } \int |\gradient g(y)|^p dy.
\end{eqnarray*}
Lastly, a simple computation shows that $\frac {c_{n,p}^p}{a_2}\omega_n^{-\frac pn} \ell_q = 1$.

\end{proof}

We turn our attention now to the proof of Theorem \ref{thm:AffineLogSobolev} and in order to do this we shall need Theorem \ref{thm:gentil} and one simple computation.

\begin{lemma}\label{eqn_exp_C}
We have
\[
\int_{\R^n} e^{-C_f(x)}dx = \Gamma\left(\frac{n}{q} + 1\right) \vol(K_f).
\]

\end{lemma}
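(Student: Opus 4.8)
The plan is to reduce the integral over $\R^n$ to a one-dimensional radial integral via polar coordinates, exploiting the $q$-homogeneity of $C_f$. First I would write
\[
\int_{\R^n} e^{-C_f(x)}\,dx = \int_{\s^{n-1}}\int_0^\infty r^{n-1} e^{-C_f(r\xi)}\,dr\,d\xi,
\]
and then use $C_f(r\xi) = r^q C_f(\xi)$ (which holds because $C_f$ is $q$-homogeneous, being the Legendre transform of the $p$-homogeneous $C_f^*$, as established in the discussion following the statement of Theorem \ref{thm:gentil} and in Proposition \ref{prop:Cdef}). For each fixed $\xi$ with $C_f(\xi) > 0$, the substitution $t = r^q C_f(\xi)$, i.e. $r = (t/C_f(\xi))^{1/q}$ and $dr = \frac 1q C_f(\xi)^{-1/q} t^{1/q - 1}\,dt$, turns the inner integral into
\[
\int_0^\infty r^{n-1}e^{-r^q C_f(\xi)}\,dr = \frac 1q\, C_f(\xi)^{-\frac nq}\int_0^\infty t^{\frac nq - 1} e^{-t}\,dt = \frac 1q\,\Gamma\!\left(\frac nq\right) C_f(\xi)^{-\frac nq}.
\]

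The next step is to recognize the remaining angular integral in terms of the volume of $K_f$. Since $r_{K_f}(\xi) = C_f(\xi)^{-1/q}$ (recorded in the paragraph defining $K_f$ and $L_f$), we have $C_f(\xi)^{-n/q} = r_{K_f}(\xi)^n$, so the polar-coordinate formula for volume, $\vol(K_f) = \frac 1n\int_{\s^{n-1}} r_{K_f}^n(\xi)\,d\xi$, gives
\[
\int_{\s^{n-1}} C_f(\xi)^{-\frac nq}\,d\xi = \int_{\s^{n-1}} r_{K_f}^n(\xi)\,d\xi = n\,\vol(K_f).
\]
Combining the two displays yields
\[
\int_{\R^n} e^{-C_f(x)}\,dx = \frac 1q\,\Gamma\!\left(\frac nq\right) \cdot n\,\vol(K_f) = \frac nq\,\Gamma\!\left(\frac nq\right)\vol(K_f) = \Gamma\!\left(\frac nq + 1\right)\vol(K_f),
\]
using the functional equation $\frac nq\,\Gamma(\frac nq) = \Gamma(\frac nq + 1)$. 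This is exactly the claimed identity.

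There is no serious obstacle here; the computation is routine once one commits to polar coordinates. The only points requiring a word of care are: that $C_f(\xi) > 0$ for every $\xi \in \s^{n-1}$, so that the substitution is legitimate and the integrand is well-defined — this follows because $K_f = \{C_f \le 1\}$ is a convex body (hence bounded, so $r_{K_f}(\xi) < \infty$ and $C_f(\xi) > 0$), as already noted in the text; and that the radial integral converges at $0$ and $\infty$, which is clear since $n \ge 1$ gives $r^{n-1}$ integrable near $0$ and the Gaussian-type decay $e^{-r^q C_f(\xi)}$ with $q > 1$ handles infinity. Fubini's theorem applies since the integrand is nonnegative. I would present the argument as the short chain of displayed equalities above, perhaps inside an \texttt{eqnarray*} or \texttt{align*} environment, with a one-line remark justifying the interchange and the substitution.
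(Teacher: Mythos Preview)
Your proof is correct and follows essentially the same route as the paper: polar coordinates, $q$-homogeneity of $C_f$, the standard Gamma-integral substitution, and the identification $C_f(\xi)^{-n/q} = \|\xi\|_{K_f}^{-n} = r_{K_f}(\xi)^n$ together with the polar volume formula and $\frac nq\Gamma(\frac nq)=\Gamma(\frac nq+1)$. The paper's proof is exactly this chain of equalities in an \texttt{eqnarray*}, just written more tersely without the justifications you add for Fubini, positivity of $C_f(\xi)$, and convergence.
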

\begin{proof}

\begin{eqnarray*}
\int_{\R^n} e^{-C_f(x)}dx&=&\int_{\s^{n-1}}\int_0^\infty r^{n-1}e^{-C_f(r\xi)}drd\xi=\int_{\s^{n-1}}\int_0^\infty r^{n-1}e^{-r^{q}C_f(\xi)}drd\xi\\
&=&\Gamma\left(\frac{n}{q}\right)q^{-1}\int_{\s^{n-1}}C_f(\xi)^{\frac{-n}{q}}d\xi\\
&=&\Gamma\left(\frac{n}{q}\right)q^{-1}\int_{\s^{n-1}}\|\xi\|_{K_f}^{-n}d\xi = \Gamma\left(\frac{n}{q}\right) \frac nq \vol(K_f)= \Gamma\left(\frac{n}{q} + 1\right) \vol(K_f).
\end{eqnarray*}

\end{proof}

\begin{potwr}{thm:AffineLogSobolev}
For the function $C_f$ already constructed, we compute the constant $\mathcal{L}_{C_f}$ in Theorem \ref{thm:gentil} as
\begin{eqnarray*}
\mathcal{L}_{C_f} &=& \frac{p^{p+1}}{ne^{p-1}\left(\int e^{-C_f(x)dx}\right)^{\frac{p}{n}}}
= \frac{p^{p+1}}{ne^{p-1}}  \Gamma\left(\frac{n}{q} + 1\right)^{-\frac pn} \vol(K_f)^{-\frac pn}.
\end{eqnarray*}
Evoking Theorem \ref{thm:main_ineq}, we get
\begin{align}
\label{form:main_ineq_1}
\mathcal{L}_{C_f} \int_{\R^n}C_f^*(\gradient f(x))dx
&\leq \frac{p^{p+1}}{ne^{p-1}}  \Gamma\left(\frac{n}{q} + 1\right)^{-\frac pn} a_2 Z_p(f)^p\\
&= \frac{p^{p+1}}{ne^{p-1}}  \Gamma\left(\frac{n}{q} + 1\right)^{-\frac pn} \frac {a_2 }{c_{n,p}^p} \mathcal{E}_p(f)^p \nonumber \\
&= \mathcal{L}_{n,p} \mathcal{E}_p(f)^p, \nonumber
\end{align}
deducing the desired inequality.

Assume for some $f$ the inequality in Theorem \ref{thm:AffineLogSobolev} is an equality, then we must have equality in \eqref{form:main_ineq_1}.
The equality case of Theorem \ref{thm:main_ineq} provides
\[\ent(|f|^p)=\int|f|^p\log|f|^pdx = \frac{n}{p}\log\left(\mathcal{L}_{n,p} \det(M)^{\frac{p-n} n} \int |\gradient g|^p dx\right),\]
where $g(x) = f(M^{-1} x)$.
We compute $\int |g|^p dx = \det(M)$ and taking $h(x) = g(x)/\det(M)^{\frac{1}{p}}$, we derive
\begin{eqnarray*}
\ent(|h|^p) &=& \ent(|f|^p) - \log(\det(M)) = \frac{n}{p}\log\left(\mathcal{L}_{n,p} \det(M)^{\frac pn - 1} \int |\gradient g|^p dx\right) - \log(\det(M))\\
&=& \frac{n}{p}\log\left(\mathcal{L}_{n,p} \int |\gradient h|^p dx\right).
\end{eqnarray*}
Finally, the characterization of the extremal functions of (\ref{log}) mentioned in the introduction completes the proof.
\end{potwr}
\\

\begin{center}
{\it Sharp affine Gagliardo-Nirenberg inequalities}
\end{center}

As a by-product we also provide a proof of the sharp affine version of the Gagliardo-Nirenberg inequality. The starting point of the Gagliardo-Nirenberg case is the following theorem proved by Cordero, Nazaret and Villani (Theorem 4 of \cite{CNV}) which we state under the form that will be used. The proof of the Sobolev case goes in a similar spirit replacing the latter by Theorem 2 of \cite{CNV}, among other natural modifications.

\begin{theorem}\label{thm:cordero_GN}
Let $q>1$, $n > 1$ and $C$ be a $q$-homogeneous, even, strictly convex function.
Suppose that $\frac{1}{p}+\frac{1}{q}=1$ and $1 < p < n$.
Let $\alpha \in (1, \frac n{n-p}]$, $r = \alpha p$, $m = \alpha(p-1)+1$.
Then for any smooth function $f$ on $\R^n$, we have
\[ \|f\|_r \leq \mathcal{G}_C \left(\int_{\R^n} C^*(\gradient f)dx \right)^{\frac \theta p} \|f\|^{1-\theta}_m .\]
Moreover, the above inequality is sharp and equality holds for the extremal function $h_{\alpha,p}(x) = (\sigma_{\alpha, p} + (\alpha-1) C(x))^{\frac 1{1-\alpha}}$, where $\sigma_{\alpha, p}>0$ is chosen so that $\|h_{\alpha, p}\|_r = 1$.
\end{theorem}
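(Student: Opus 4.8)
The natural route is the mass-transportation argument of Cordero-Erausquin, Nazaret and Villani, which I would reconstruct as follows. First I would reduce to the case $f \geq 0$: replacing $f$ by $|f|$ leaves $\|f\|_r$ and $\|f\|_m$ unchanged and, since $\nabla|f| = \pm \nabla f$ almost everywhere and $C^*$ is even, also leaves $\int C^*(\nabla f)$ unchanged. By a standard approximation I may further assume $f$ is smooth, strictly positive and rapidly decaying, so that all integrals converge and the boundary terms in the integration by parts below vanish. After normalizing so that $\|f\|_r = \|h\|_r = 1$, where $h = h_{\alpha,p}$, both $f^r$ and $h^r$ are probability densities.

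Next I would invoke Brenier's theorem: there is a convex function $\phi$ whose gradient $T = \nabla\phi$ pushes $f^r\,dx$ forward to $h^r\,dx$, and which satisfies the Monge-Amp\`ere equation $f(x)^r = h(T(x))^r \det D^2\phi(x)$ almost everywhere. Changing variables in $\int h^{r(1-1/n)}\,dy$ and substituting the Monge-Amp\`ere equation gives the identity $\int h^{r(1-1/n)}\,dy = \int f^{r(1-1/n)}(\det D^2\phi)^{1/n}\,dx$. The arithmetic-geometric mean inequality applied to the eigenvalues of the positive semidefinite matrix $D^2\phi$ yields $(\det D^2\phi)^{1/n} \leq \frac1n \Delta\phi$, the Laplacian being taken in the Aleksandrov sense (which dominates the distributional one). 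Integrating by parts to transfer the derivative onto $f$ then produces a term of the form $\int f^{\,\beta}\,\langle\nabla f, T\rangle\,dx$ for an explicit exponent $\beta$ determined by $n$ and $r$.

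The two inequalities that carry the geometry are applied at this stage. The Legendre-Young inequality, combined with the $p$- and $q$-homogeneity of $C^*$ and $C$, gives the scale-invariant bound $\langle a, b\rangle \leq c_{p,q}\,C^*(a)^{1/p} C(b)^{1/q}$, which I apply with $a = \nabla f$ (using evenness) and $b = T$. The decisive point is that the transport identity converts the unknown $C$-moment $\int C(T)\,f^r\,dx$ into the fixed reference quantity $\int C(y)\,h^r\,dy$. A final application of H\"older's inequality, with exponents dictated by $p$, $q$ and $\alpha$, separates the factor $\left(\int C^*(\nabla f)\,dx\right)^{\theta/p}$ from the factor $\|f\|_m^{1-\theta}$, the exponent $\theta$ being forced by matching the homogeneities of the surviving powers of $f$ against the normalization $\|f\|_r = 1$; this assembles precisely the stated Gagliardo-Nirenberg inequality, with $\mathcal{G}_C$ appearing as the collected reference constant.

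Sharpness, and the identification of $h_{\alpha,p}$ as an extremizer, follow by checking that every inequality in the chain becomes an equality when $f = h$: the Brenier map is then the identity, so the arithmetic-geometric mean step is tight with $D^2\phi = I$, and the profile $h = (\sigma_{\alpha,p} + (\alpha-1)C)^{1/(1-\alpha)}$ is designed exactly so that the Legendre-Young step is an equality, the Euler-Lagrange alignment $\nabla C^*(\nabla h) \parallel T$ holding trivially. This simultaneously pins down $\mathcal{G}_C$ as the best constant. I expect the main obstacle to be the exponent bookkeeping in the H\"older step: one must verify that the residual powers of $f$ left after the integration by parts recombine with $\|f\|_r = 1$ to yield precisely $\theta = \frac{np(r-m)}{r(m(p-n)+np)}$ and that the reference integrals over $h$ assemble into the sharp constant $\mathcal{G}_C$. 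A secondary technical difficulty is the rigorous justification of the integration by parts, which requires either the regularity theory for the Brenier map or a careful approximation controlling decay at infinity.
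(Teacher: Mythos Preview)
The paper does not supply its own proof of this statement: Theorem \ref{thm:cordero_GN} is quoted as Theorem~4 of \cite{CNV} and used as a black-box input to the paper's argument. Your sketch is a faithful outline of the Cordero-Erausquin--Nazaret--Villani mass-transportation proof that the paper defers to, so there is nothing to compare---your approach \emph{is} the cited one.
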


\begin{potwr}{thm:AffineGN}
Firstly, we compute the constant
\begin{eqnarray*}
\sigma_{\alpha, p} &=& \left(\frac{q (\alpha -1)^{\frac{n}{q}} \Gamma \left(\frac{p \alpha }{\alpha - 1}\right)}{n \vol(\{x \in \R^n :\ C(x) \leq 1 \}) \Gamma \left(\frac{n}{q}\right) \Gamma \left(\frac{\frac{p q \alpha }{\alpha - 1} - n}{q}\right)}\right)^{-\frac{(1-\alpha ) q}{\alpha  n-n-\alpha  p q}}
\end{eqnarray*}
and then the constant
\begin{eqnarray*}
\mathcal{G}^{-1}_C &=& \| \gradient h_{\alpha,p}\|_p^\theta \|h_{\alpha,p}\|_m^{1-\theta}\\
&=& (\alpha -1)^{-\theta } \left(\frac{n}{p}\right)^{\frac{\theta}{p}} \Gamma \left(n \left(\frac{1}{p}-1\right)+\frac{p \alpha }{\alpha -1}\right)^{-\frac{1}{\alpha  p}} \Gamma \left(\frac{p \alpha }{\alpha -1}\right)^{\theta  \left(\frac{\frac{\alpha  p}{\alpha -1}-1}{n}-1\right)} \Gamma \left(\frac{p \alpha }{\alpha -1}-1\right)^{\theta +\frac{\alpha  \theta  p}{n-\alpha  n}} \\
&\times& \Gamma \left(n \left(\frac{1}{p}-1\right)+\frac{p \alpha }{\alpha -1}-1\right)^{\theta  \left(\frac{\alpha  p}{(\alpha -1) n}+\frac{1}{p}-1\right)} \left(\vol(\{x \in \R^n :\ C(x) \leq 1 \}) \Gamma \left(-\frac{n}{p}+n+1\right)\right)^{\frac{\theta}{n}}\\
&=&c_1^\theta \vol(K_f)^{\frac{\theta}{n}},
\end{eqnarray*}
so that we obtain
\begin{eqnarray*}
\|f\|_r &\leq& \mathcal{G}_{C_f} \left( \int_{\R^n} C_f^*(\gradient f(x)) dx \right)^{\frac{\theta}{p}} \|f\|^{1-\theta}_m\\
 &=& c_1^{-\theta} \left( \vol(K_f)^{-\frac pn} \int_{\R^n} C_f^*(\gradient f(x)) dx \right)^{\frac{\theta}{p}} \|f\|^{1-\theta}_m\\
&\leq& \frac{a_2^{\frac{\theta}{p}}} {c_1^\theta} Z_p(f)^\theta \|f\|^{1-\theta}_m =  \frac{a_2^{\frac{\theta}{p}}} {c_1^\theta c_{n,p}^\theta }\mathcal{E}_p(f)^\theta \|f\|^{1-\theta}_m .
\end{eqnarray*}

Finally,
\[
\|f\|_r \leq c_2^\theta \mathcal{E}_p(f)^\theta \|f\|^{1-\theta}_m,
\]
where
\begin{eqnarray*}
c_2 &=& \frac{(\alpha -1)^2 p q^{-\frac{1}{q}} n^{-\frac{1}{p}} \left(\frac{\alpha  p}{\alpha -1}-1\right)^{\frac{\alpha  n-n-\alpha  p}{(\alpha -1) n}} \left(n \left(\frac{1}{p}-1\right)+\frac{\alpha  p}{\alpha -1}-1\right)^{\frac{\alpha  n-n+\alpha  p^2}{(\alpha -1) n p}}}{\sqrt{\pi } ((p-1) (-\alpha  n+n+\alpha  p)+p)}\\
&\times&  \left(\frac{\Gamma \left(-\frac{n}{p}+n+1\right) \Gamma \left(n \left(\frac{1}{p}-1\right)+\frac{p \alpha }{\alpha -1}\right)}{\Gamma \left(\frac{n}{2}+1\right) \Gamma \left(\frac{p \alpha }{\alpha -1}\right)}\right)^{-\frac{1}{n}} .
\end{eqnarray*}
Note that after some computations, $\mathcal{G}_{n,p,\alpha} = c_2^\theta$ reduces to the constant obtained in Corollary 1.3 of \cite{Zhai}, which is also the best Gagliardo-Nirenberg constant obtained in Theorem 1.2 of \cite{DPDo}.

The classification of the extremal functions is as follows. Let $f$ be an extremal function of \eqref{eqn_AffineGN}, then by \eqref{eqn_extre} we have
\[\|f\|_r = \mathcal{G}_{n,p,\alpha} \left( \det(M)^{\frac {p - n}{n p}} \|\gradient g\|_p \right)^\theta \|f\|^{1-\theta}_m,\]
where $g(x) = f(M^{-1} x)$. Then,
\begin{eqnarray*}
\|g\|_r &=& \left( \int \det(M) f(x)^r dx\right)^{\frac{1}{r}} \\
&=& \det(M)^{\frac 1r - \theta \frac {n-p}{n p}} \mathcal{G}_{n,p,\alpha} \|\gradient g\|_p^\theta \|f\|^{1-\theta}_m\\
&=& \det(M)^{\frac 1r - \theta \frac {n-p}{n p} - \frac {1-\theta} m} \mathcal{G}_{n,p,\alpha} \|\gradient g\|_p^\theta \|g\|^{1-\theta}_m\\
&=& \mathcal{G}_{n,p,\alpha} \|\gradient g\|_p^\theta \|g\|^{1-\theta}_m
\end{eqnarray*}
and thus $g$ must be an extremal function for the Euclidean Gagliardo-Nirenberg inequality. This completes the proof.
\end{potwr}

\section{Proof of Theorem \ref{thm:AffineLogSobolev-infty}}
\label{sec:inft}

Let $H(x)$ be an arbitrary norm on $\R^n$ and $p, q > 1$ such that $\frac{1}{p}+\frac{1}{q}=1$.  Write $C(x) = H(x)^q$ for $x \in \R^n$. Then, Lemma \ref{eqn_legendre_polar_equivalence} together with a simple computation gives
\[C^*(v) = \bar H(v)^p \ell_q,\]
where $\bar H$ is the norm of the polar body defined by $H$ (i.e. the dual norm of $H$).

Applying Theorem \ref{thm:gentil} to the function $e^{\frac{\beta}{p} f}$ and taking $p \to \infty$ we obtain a general version of the $L_\infty$ log-Sobolev inequality, proved by Fujita in \cite{Fu}. Namely,
\begin{equation} \label{1}
\ent(e^{\beta f}) \leq n \log\left( \mathcal L_{n,\infty, H} \|\bar H(\gradient f) \|_{\infty} \right),
\end{equation}
where $\mathcal L_{n,\infty, H} = \frac \beta e \left( \int_{\R^n} e^{-H(x)} dx \right)^{-\frac{1}{n}}$.

In order to turn this one into an affine $L_\infty$ log-Sobolev inequality, consider the convex body $L_f$ so that $\bar H_f = h_{L_f}$. This latter replaces Lemma \ref{eqn_C_support_centroid} in our method.

Define $K_f = \{x \in \R^n : H_f(x) \leq 1\}$, where $H_f$ denotes the dual norm of $\bar H_f$.

The equality
\[ K_f^\circ = \{\bar H_f\leq 1\}\]
is trivial and replaces Lemma \ref{eqn_legendre_polar_equivalence}. So, the relation between $K_f$ and $L_f$ becomes trivial. In fact, $K_f = L_f$.

As usual,
\[ \vol(L_f) = \frac 1n \int_{\s^{n-1}}  \|\gradient_\xi f\|_{\infty}^{-n} d\xi.\]

The computation of the right-hand side of (\ref{1}), which is done in Lemma \ref{eqn_C_grad_f}, now becomes almost trivial, once
\[\bar H_f(\gradient f(x)) = \max\{ \gradient_\xi f(x): \  \|\gradient_\xi f\|_{\infty} \leq 1\} \leq 1.\]

The computation of the constant $\mathcal L_{p,\infty, H_f}$, which replaces Lemma \ref{eqn_exp_C}, gives
\[\mathcal L_{p,\infty, H_f} = \frac \beta e \left( \vol(K_f) \Gamma(n+1) \right)^{-\frac{1}{n}}.\]

In conclusion,
\begin{align*}
\ent(e^{\beta f}) &\leq n \log\left( \frac \beta e (n^{-1} \Gamma(n+1))^{-\frac{1}{n}} \left( \int_{\s^{n-1}}  \|\gradient_\xi f\|_{\infty} d\xi \right)^{-\frac{1}{n}} \right)\\
&= n \log\left( \frac \beta e (\Gamma(n+1) \omega_n)^{-\frac{1}{n}} \mathcal E_\infty(f) \right)\\
&= n \log\left( \frac{\beta k_n}{e} \mathcal E_\infty(f) \right),
\end{align*}
so the desired inequality follows. To see its sharpness, choose $f(x) = c - b |Ax - a|$ such that $\int e^{\beta f} dx=1$, where $a \in \R^n$, $b > 0$, $c \in \R$ and $A\in SL_n$. In this case, an integration by parts gives

\[
\ent(e^{\beta f}) = \beta c - n.
\]
Moreover, the normalization $\int e^{\beta f} dx=1$ provides

\[
e^{-\beta c} = \frac{n! \omega_n}{\beta^n b^n} .
\]
So,

\[
\ent(e^{\beta f}) = n \log \left(\frac{\beta k_n}{e} b \right) .
\]
On the other hand, note that $\|\gradient_\xi f\|_\infty = b$ for every $\xi \in \s^{n-1}$, and so we have

\[
n \log\left(\frac{\beta k_n}{e} \mathcal{E}_\infty(f) \right) = n \log\left(\frac{\beta k_n}{e} b\ c_{n,\infty} (n \omega_n)^{-\frac{1}{n}} \right) = n \log \left(\frac{\beta k_n}{e} b \right).
\]
This ends the proof.\ \qed\\

\noindent {\bf Acknowledgments:}
The first author was supported by CAPES (BJT 064/2013). The second author acknowledges the support provided by CAPES and IMPA. The third author was supported by CAPES (BEX 6961/14-2), CNPq (PQ 306406/2013-6) and Fapemig (PPM 00223-13). The authors are indebted to F. Schuster for pointing out several references and useful comments on a previous version of this work and as well to the referee for several valuable comments and for proposing the study with our method of an affine $L^\infty$ version of log-Sobolev inequalities proved in the last section.

\end{document}